\def\beeq{\begin{equation}}      \def\eneq{\end{equation}}
\def\beeqy{\begin{eqnarray}}     \def\eneqy{\end{eqnarray}}
\def\bece{\begin{center}}        \def\ence{\end{center}}
\def\ba {\begin{array}}          \def\ea {\end{array}}
\def\bess{\begin{eqnarray*}}     \def\eess{\end{eqnarray*}}
\def\bes{\begin{split}}          \def\ens{\end{split}}
\def\bali{\begin{align}}         \def\enali{\end{align}}
\def\ali{\aligned}               \def\eali{\endaligned}
\def\pa{\partial} \def\ov{\overline} \def\disp{\displaystyle}\def\d{\displaystyle\frac}
\def\f{\frac}     \def\q {\quad}      
\def\la{\langle}  \def\ra{\rangle}   \def\l{\left} \def\r{\right}
\def\s1{\sqrt{-1}}
\def\suml{\sum\limits}
\theoremstyle{plain}
\newtheorem{theorem}{\noindent{\bf Theorem}}[section]
\newtheorem{definition}[theorem]{\noindent{\bf Definition}}
\newtheorem{prop}[theorem]{\noindent{\bf Proposition}}
\newtheorem{lem}[theorem]{\noindent{\bf Lemma}}
\newtheorem{ex}[theorem]{\noindent{\bf Example}}
\newtheorem{rem}[theorem]{\noindent{\bf Remark}}
\def\az{\alpha}      \def\bz{\beta}     \def\gz{\gamma}     \def\dz{\delta}                   \def\tz{\theta}                  \def\lz{\lambda}         \def\rz{\rho}          \def\sz{\sigma}                        \def\oz{\omega}
      \def\ttz{\Theta}  
    \def\ooz{\Omega}
\def\CA{{\cal A}}    
  \def\CH{{\cal H}}  
    \def\CL{{\cal L}}
\def\CS{{\cal S}}    
\def\CV{{\cal V}}    \def\CX{{\cal X}}
    \def\BC{\mathbb{C}}
\def\BP{{\Bbb P}}
     \def\det {{\rm det\,}}
        \def\bin#1#2 {{#1\choose#2}}
\def\disp {\displaystyle}
           \def\dfrac#1#2 {{\displaystyle{#1\over#2}}}
           \def\din#1#2 {{\displaystyle{#1\choose#2}}}
\def\disp{\displaystyle}
\def\a.{{\rm \ddot{a}}}
\begin{document}
\renewcommand{\thesection}{\arabic{section}}
\renewcommand{\theequation}{\thesection.\arabic{equation}}
\baselineskip=18pt

\title{\bf Characterizations of complex Finsler Metrics}
\author{ Hongjun Li$^{1}$, Hongchuan Xia$^{2}$\\
\emph{\small $^1$School of Mathematics and Statistics, Henan University, Kaifeng {\rm 475004}, China;}\\
\emph{\small $^2$School of Mathematics and Statistics,
Xinyang Normal University, Xinyang {\rm 464000}, China.}\\
\emph{\small \rm Email: lihj@vip.henu.edu.cn, xhc@xynu.edu.cn}
 }
\date{}
\maketitle

\noindent{\bf Abstract}\hskip3mm
Munteanu \cite{Mu} defined the canonical connection associated to a strongly pseudoconvex complex Finsler manifold $(M,F)$.
We first prove that the holomorphic sectional curvature tensors of the canonical connection coincide with those of the Chern-Finsler connection associated to $F$ if and only if $F$ is a K\"ahler-Finsler metric.
We also investigate the relationship of the Ricci curvatures (resp. scalar curvatures) of these two connections when $M$ is compact. As an application, two characterizations of balanced complex Finsler metrics are given. Next, we obtain a sufficient and necessary condition for a balanced complex Finsler metric to be K\"ahler-Finsler.
Finally, we investigate conformal transformations of a balanced complex Finsler metric.

\vspace*{2mm}
\noindent{\bf Key words}\hskip3mm  Chern-Finsler connection; canonical connection; holomorphic sectional curvature tensor; balanced complex Finsler metric; Rund K\"ahler-Finsler-like metric

\vspace*{2mm}
\noindent{\bf Mathematics Subject Classification:}\hskip3mm 53C56, 53C60, 32Q99

\vspace*{4mm} \thispagestyle{empty}
\section{Introduction}
%
%
%

Complex Finsler manifolds are complex manifolds endowed with complex Finsler metrics, which are
more general than Hermitian manifolds. Recently, complex Finsler geometry has attracted much interest because a number of complex Finsler metrics play an important role in geometric function theory of holomorphic mappings \cite{AP}.

In complex Finsler geometry, the Chern-Finsler connection associated to a strongly pseudoconvex complex Finsler metric enjoys some beautiful features as the Hermitian (aka Chern) connection associated to a Hermitian metric. According to the vanishing of some parts of the torsion of the Chern-Finsler connection, there are three kinds of metrics in complex Finsler geometry, which are the extension of K\"ahler metrics, and  respectively,  called the strongly K\"ahler-Finsler, K\"ahler-Finsler and weakly K\"ahler-Finsler \cite{AP}. In fact, K\"ahler-Finsler metrics are actually strongly K\"ahler \cite{CS}. Hence, there are only two kinds of K\"ahler-Finsler metrics with respect to the Chern-Finsler connection in complex Finsler geometry.

Let $F$ be a strongly pseudoconvex complex Finsler metric on a complex manifold $M$, and $D$ be the Chern-Finsler connection associated to $F$. The Chern-Finsler connection $D$ can induce a canonical connection $\nabla$ \cite{Mu}. Let $R_{\az\bar\bz\mu\bar\nu}$ and $\ooz_{\az\bar\bz;\mu\bar\nu}$ denote the holomorphic sectional curvature tensors of the canonical connection $\nabla$ and the Chern-Finsler connection $D$, respectively,  where lowercase Greek indices run from 1 to $n={\rm dim}_{\BC}(M)$.
When $F$ is a K\"ahler-Finsler metric, one has $R_{\az\bar\bz\mu\bar\nu}=\ooz_{\az\bar\bz;\mu\bar\nu}$. A naive question is, when a strongly pseudoconvex complex Finsler metric $F$ satisfies $R_{\az\bar\bz\mu\bar\nu}=\ooz_{\az\bar\bz;\mu\bar\nu}$, whether it is K\"ahler-Finsler. In this paper, we give a positive answer to this question.
Denote by $K_{\nabla}(H)$ and $K_D(H)$ (resp. $K_{\nabla}(v)$ and $K_D(v)$) the holomorphic flag curvatures (resp. the holomorphic sectional curvatures) of the canonical connection and the Chern-Finsler connection along a horizontal vector $H\in\CH$ (resp. a holomorphic vector $v$), respectively.
We have the following result:

\begin{theorem}
Let $(M,F)$ be a strongly pseudoconvex complex Finsler manifold. Then
\begin{itemize}
\item[{\rm (1)}] $R_{\az\bar\bz\mu\bar\nu}=\ooz_{\az\bar\bz;\mu\bar\nu}$, if and only if
$K_{\nabla}(H)=K_D(H)$, if and only if $F$ is a K\"ahler-Finsler metric;
\item[{\rm (2)}] $K_{\nabla}(v)=K_{D}(v)$,
if and only if $F$ is a weakly K\"ahler-Finsler metric.
\end{itemize}
\end{theorem}

Let $z=(z^{1},\cdots,z^{n})$ be a local holomorphic coordinate system on $M$, and
$ {G}_{\az\bar{\bz}}$ be the fundamental tensor of $(M,{F})$. Set $(G^{\bar{\bz}\az})=(G_{\az\bar{\bz}})^{-1}$.
We denote by
$$
\mathbf{Ric}_{\nabla} =\sqrt{-1}R _{\mu\bar{\nu}}\mathrm{d}z^{\mu}\wedge \mathrm{d}\bar{z}^{\nu}\q
with \q R _{\mu\bar{\nu}}=G^{\bar{\bz}\az}R_{\az\bar\bz\mu\bar\nu},
$$
and
$$
\mathbf{Ric}_{D} =\sqrt{-1}\ooz _{\mu\bar{\nu}}\mathrm{d}z^{\mu}\wedge \mathrm{d}\bar{z}^{\nu}\q
with \q
\ooz _{\mu\bar{\nu}}
=G^{\bar{\bz}\az}\ooz_{\az\bar\bz;\mu\bar\nu}
$$
the Ricci curvatures of the canonical connection $\nabla$ and the Chern-Finsler connection $D$, respectively.
The scalar curvatures of the canonical connection $\nabla$ and the Chern-Finsler connection $D$ are denoted
by $s_{\nabla}=G^{\bar{\nu}\mu}R_{\mu\bar{\nu}}$ and $s_{D}=G^{\bar{\nu}\mu}\ooz_{\mu\bar{\nu}}$, respectively.
When $M$ is compact, we obtain explicit relations between these two kinds of Ricci curvatures by using the horizontal fundamental form $\oz_{\CH}$ and furthermore  give two characterizations
of balanced complex Finsler metrics. Here we say that a compact strongly pseudoconvex complex Finsler manifold $(M,F)$ is balanced if the horizontal fundamental form $\oz_{\CH}=\sqrt{-1} G_{\az\bar{\bz}}\mathrm{d}z^{\az}\wedge \mathrm{d}\bar{z}^{\bz}$ is horizonal co-closed, i.e., $\mathrm{d}_{\CH}^*\oz_{\CH}=0$.

\begin{theorem}
Let $(M,F)$ be a compact strongly pseudoconvex complex Finsler manifold. Then
\begin{equation}\label{ric}
\mathbf{Ric}_{D} -\mathbf{Ric}_{\nabla} =\d{1}{2}\l({\pa}_{\CH}{\pa}_{\CH}^*\oz_{\CH}
+\bar{\pa}_{\CH}\bar{\pa}_{\CH}^*\oz_{\CH}\r),
\end{equation}
\begin{equation}\label{sca}
s _{D}-s _{\nabla}=\d{1}{2}\l\la{\pa}_{\CH}{\pa}_{\CH}^*\oz_{\CH}
+\bar{\pa}_{\CH}\bar{\pa}_{\CH}^*\oz_{\CH},\oz_{\CH}\r\ra_{\BP\tilde{M}}.
\end{equation}
Hence, the following three statements are equivalent:
\begin{itemize}
\item[{\rm (a)}] $F$ is a balanced complex Finsler metric;
\item[{\rm (b)}] $\mathbf{Ric}_{D} =\mathbf{Ric}_{\nabla} $;
\item[{\rm (c)}] $\disp\int_{\BP\tilde{M}}s_{D}\mathrm{d}\mu_{\BP\tilde{M}}
=\disp\int_{\BP\tilde{M}}s_{\nabla}\mathrm{d}\mu_{\BP\tilde{M}}$.
\end{itemize}
\end{theorem}

We can extend the canonical connection $\nabla$ to $T_{\BC}\l(T^{1,0}M\backslash \{0\}\r)$, denoted by $\nabla^{\mathbbm{c}}$. The connection $\nabla^{\mathbbm{c}}$ is a natural generalization of the complexified Levi-Civita connection on the complexified tangent bundle of a Hermitian manifold.
Denote by  $\mathbf{Ric} _{\nabla^{\mathbbm{c}}}=\sqrt{-1}K _{\mu\bar{\nu}}\mathrm{d}z^\mu\wedge \mathrm{d}\bar{z}^{\nu}$ and $s _{\nabla^{\mathbbm{c}}}=G^{\bar{\nu}\mu}K _{\mu\bar{\nu}}$ the Ricci curvature and scalar curvature of the connection $\nabla^{\mathbbm{c}}$, respectively. We have

\begin{theorem}
Let $(M,F)$ be a compact strongly pseudoconvex complex Finsler manifold. Then
\begin{equation}\label{ric2}
\mathbf{Ric}_{D} -\mathbf{Ric}_{\nabla^{\mathbbm{c}}} =\d{1}{2}\l({\pa}_{\CH}{\pa}_{\CH}^*\oz_{\CH}
+\bar{\pa}_{\CH}\bar{\pa}_{\CH}^*\oz_{\CH}\r)+\sqrt{-1}\mathfrak{S}\circ \ov{\mathfrak{S}},
\end{equation}
\begin{equation}\label{sca2}
s_{D}-s_{\nabla^{\mathbbm{c}}}=\d{1}{2}\l\la{\pa}_{\CH}{\pa}_{\CH}^*\oz_{\CH}
+\bar{\pa}_{\CH}\bar{\pa}_{\CH}^*\oz_{\CH},\oz_{\CH}\r\ra_{\BP\tilde{M}}
+\la \mathfrak{S},\mathfrak{S}\ra_{\BP\tilde{M}},
\end{equation}
where $\mathfrak{S}$ is given by \eqref{s}.
Hence, the following four statements are equivalent:
\begin{itemize}
\item[{\rm (a)}] $F$ is a K\"ahler-Finsler metric;
\item[{\rm (b)}] $\mathbf{Ric}_{D} =\mathbf{Ric}_{\nabla^{\mathbbm{c}} } $;
\item[{\rm (c)}] $\disp\int_{\BP\tilde{M}}s_{\nabla}\mathrm{d}\mu_{\BP\tilde{M}}
=\disp\int_{\BP\tilde{M}}s_{\nabla^{\mathbbm{c}}}\mathrm{d}\mu_{\BP\tilde{M}}$;
\item[{\rm (d)}] $\disp\int_{\BP\tilde{M}}s_{D}\mathrm{d}\mu_{\BP\tilde{M}}
=\disp\int_{\BP\tilde{M}}s_{\nabla^{\mathbbm{c}}}\mathrm{d}\mu_{\BP\tilde{M}}$.
\end{itemize}
\end{theorem}

Denote by $\hat{\ooz}^\az_{\bz;\mu\bar{\nu}}\mathrm{d}z^\mu\wedge \mathrm{d}{\bar z}^\nu$ the horizontal part of the curvature form of the complex Rund connection $\hat{D}$. By analogy with the K\"ahler-like metric defined in \cite{YZ}, we say $F$ is a Rund K\"ahler-Finsler-like metric if $\hat{\ooz}^\az_{\bz;\mu\bar{\nu}}
=\hat{\ooz}^\az_{\mu;\bz\bar{\nu}}$. When the corresponding manifold is compact, these metrics are more special than balanced complex
Finsler metrics.

\begin{theorem}
Let $(M,F)$ be a Rund K\"ahler-Finsler-like manifold. If $M$ is compact, then $F$ is a balanced complex Finsler metric.
\end{theorem}

We obtain a sufficient and necessary condition
for a balanced complex Finsler metric to be K\"ahler-Finsler.

\begin{theorem}
Let $(M,F)$ be a balanced complex Finsler manifold. Then $F$ is a K\"ahler-Finsler metric, if and only if
$$
\l\la\pa_{\CH}\bar{\pa}_{\CH}\oz_{\CH}, \oz_{\CH}^2\r\ra_{\BP\tilde{M}}=0.
$$
\end{theorem}

In the end, we investigate conformal transformations of balanced complex Finsler metrics.
We find that balanced complex Finsler metrics are clearly unique within each conformal class.
In the non-compact case, we get a sufficient and necessary condition
for two Rund K\"ahler-Finsler metrics to be conformal.

\begin{theorem}
On a compact complex manifold $M$, there exists at most one balanced complex Finsler metric (up to constant multiples) in each conformal class of strongly pseudoconvex complex Finsler metrics. For a non-compact strongly pseudoconvex Rund K\"ahler-Finsler-like manifold $(M,F)$,  the conformal metric
$\tilde {F}(z,v)={\rm e}^{\frac{1}{2}\rz(z)}F(z,v)$ is Rund K\"ahler-Finsler-like if and only if $\pa\bar{\pa}\rho=0$.
\end{theorem}

We investigate the characterization for  a strongly pseudoconvex complex Finsler metric to be conformal to a balanced complex Finsler metric.

\begin{theorem}
On a compact strongly pseudoconvex complex Finsler manifold $(M,F)$, the conformal metric $\tilde {F}(z,v)={\rm e}^{\frac{1}{2}\rz(z)}F(z,v)$ is balanced, if and only if
\beeq
\mathbf{Ric}_{D} -\mathbf{Ric}_{\nabla} =(n-1)\sqrt{-1}\pa\bar{\pa}\rho,
\eneq
if and only if
\beeq
\int_{\BP\tilde{M}}\l(s_{D} -s_{\nabla} \r)\mathrm{d}\mu_{\BP\tilde{M}}
=(n-1)\l(\sqrt{-1}\pa\bar{\pa}\rho,\oz_{\CH}\r)_{\BP\tilde{M}}.
\eneq
\end{theorem}

We also study the Chern-Finsler total scalar curvature under conformal transformations of a balanced complex Finsler manifold.

\begin{theorem}
Let $(M,F)$ be a balanced complex Finsler manifold with positive Chern-Finsler scalar curvature. Then the conformal metric $\tilde {F}(z,v)={\rm e}^{\frac{1}{2}\rz(z)}F(z,v)$ owns positive Chern-Finsler total scalar curvature, that is
\beeq\label{ }
\int_{\BP\tilde{M}}\tilde{s}_{\tilde{D}}  \mathrm{d}\tilde{\mu}_{\BP\tilde{M}}>0.
\eneq
\end{theorem}

The rest of this paper is arranged as follows.
In Section 2, we provide a brief overview of complex Finsler manifolds, and introduce balanced complex Finsler manifolds and Rund K\"ahler-Finsler-like manifolds.
In Section 3, we recall detailedly the canonical connection defined by Munteanu.
In Section 4, we examine the behavior of holomorphic sectional curvature tensors of the canonical connection and the Chern-Finsler connection, and also investigate the corresponding Ricci curvatures and scalar curvatures.
In Section 5, we prove Theorems 1.4 and 1.5, and give two examples of balanced complex Finsler metrics (resp. Rund K\"ahler-Finsler-like metrics). In Section 6, we investigate conformal transformations of
a balanced complex Finsler metric.

%
%
%
\section{Preliminaries}
\setcounter{equation}{0}
%
%
%
\subsection{Complex Finsler manifolds}

Let $M$ be a complex manifold with ${\rm dim}_{\BC}(M)=n\geq2$.
We shall denote by $\pi:T^{1,0}M\rightarrow M$ the holomorphic tangent bundle of $M$.
Suppose that $z=(z^{1},\cdots,z^{n})$ is a local holomorphic coordinate system on $M$, then an arbitrary holomorphic vector $v\in T^{1,0}M$ can be written as
$$
v=v^\az\frac{\pa}{\pa z^{\az}}.
$$
So $(z,v)$ is a local holomorphic coordinate system on $T^{1,0}M$.
Let $\tilde{M}=T^{1,0}M\backslash \{0\}$ denote the slit holomorphic tangent bundle.

\begin{definition}{\rm \cite{AP}}
A complex Finsler metric on a
complex manifold $M$ is a continuous function
$F:\, T^{1,0}M\rightarrow[0,+\infty)$ satisfying:
\begin{itemize}
\item[{\rm (a)}] $G=F^2$ is smooth on $\tilde{M}$;
\item[{\rm (b)}] $F(z,v)>0$ for all $(z,v)\in \tilde{M}$;
\item[{\rm (c)}] $F(z,\zeta v)=|\zeta|F(z,v)$ for all $(z,v)\in T^{1,0}M$ and $\zeta\in\mathbb{C}$.
\end{itemize}
\end{definition}

For simplicity, we denote $\ov{z^{\bz}}$, $\ov{v^{\bz}}$
by $ \bar{z}^{\bz}$, $ \bar{v}^{\bz}$, the derivatives of $G$ with respect to $v$ by
$$
G_{\az}=\dot{\pa}_{\az}G=\d{\pa G}{\pa v^\az},\q G_{\bar\az}=\dot{\pa}_{\bar\az}G=\d{\pa G}{\pa \bar{v}^\az},\q
G_{\az\bar{\bz}}=\dot\pa_{\az}\dot\pa_{\bar{\bz}}G
=\d{\pa^2 G}{\pa v^\az \pa \bar{v}^\bz},
$$
and the derivatives of $G$ with respect to $z$ by indices after a semicolon,
for instance
$$
G_{;\az}=\pa_{\az}G=\d{\pa G}{\pa z^\az}, \q
G_{;\az\bar{\bz}}=\pa_{\az}\pa_{\bar\bz}G
=\d{\pa^2 G}{\pa z^{\az}\pa \bar{z}^{\bz}}, \q
G_{\az;\bar{\bz}}=\dot{\pa}_{\az}\pa_{\bar\bz}G
=\d{\pa^2 G}{\pa \bar{z}^{\bz}\pa v^{\az}}.
$$

\begin{definition}{\normalfont \cite{AP}}
A complex Finsler metric $F$ will be said strongly pseudoconvex
if the Levi matrix $\l(G_{\az\bar{\bz}}\r)$ is positive definite on $\tilde{M}$.
\end{definition}

Suppose $(M,F)$ is a strongly pseudoconvex complex Finsler manifold.
Let us denote by
\begin{equation}
\varGamma^{\az}_{;\mu}=G^{\bar{\lz}\az}G_{\bar{\lz};\mu}
\end{equation}
the Chern-Finsler nonlinear connection coefficients.
Set
\begin{equation}
\delta_{\mu}=\partial_{\mu}-\varGamma_{;\mu}^{\az}\dot{\partial}_{\az},\q
\delta v^{\az}={\rm d}v^{\az}+\varGamma_{;\bz}^{\az}{\rm d}z^{\bz},
\end{equation}
then the holomorphic tangent bundle $T^{1,0}\tilde{M}$ of $\tilde{M}$ can
be decomposed into the direct sum of the horizontal bundle $\mathcal {H}$ spanned by $\{\dz_{\mu}\}$ and the vertical bundle $\CV$ spanned by $\{\dot\pa_{\az}\}$.
The dual frame of $\{\dz_\mu,\dot\pa_\az\}$ is $\{\mathrm{d}z^{\mu},\delta v^{\az}\}$.
In other words, we have
$$
\ali
T^{1,0}\tilde{M}&=\CH\oplus\CV
={\rm span}\{\dz_{\mu}\}\oplus {\rm span}\{\dot\pa_{\az}\},\\
T^{*1,0}\tilde{M}&=\CH^*\oplus\CV^*
={\rm span}\{\mathrm{d}z^{\mu}\}\oplus {\rm span}\{\dz v^{\az}\}.
\eali
$$
We can define a Hermitian metric $\la \, , \, \ra$ on the vertical bundle $\CV$ by the strong pseudoconvexity of $F$. For any $ v\in\tilde{M}_z$, we set
$$
\l\la Z , W \r\ra_v=G_{\az\bar{\bz}}(z,v)Z^\az\bar{W}^{\bz}, \q
Z=Z^\az\dot{\pa}_{\az}, \, W=W^\az\dot{\pa}_{\az}\in \CV_v.
$$
There exists a unique complex vertical connection $D:\CX(\CV)\rightarrow \CX(T_{\BC}^{*}{\tilde M}\otimes \CV)$
such that
\begin{equation*}
X\l\la Z, W \r\ra =\l\la \nabla_{X}Z, W \r\ra+ \l\la Z, \nabla_{\bar{X}}W \r\ra
\end{equation*}
for all $X\in T^{1,0}\tilde M$ and $Z,W \in \CX(\CV)$.
Kobayashi \cite{Ko} first introduced this connection $D$, Abate and Patrizio \cite{AP} called it the Chern-Finsler connection. The connection 1-forms of $D$ are given by
\begin{equation}
\tz_{\bz}^{\az}=G^{\bar{\lz}\az}\partial G_{\bz\bar{\lz}}
=\varGamma_{\bz;\mu}^{\az}\mathrm{d}z^{\mu}+\varGamma_{\bz\gz}^{\az}\delta v^{\gz},
\end{equation}
where
\begin{equation}
 \varGamma_{\bz;\mu}^{\az}=G^{\bar{\lz}\az}\dz_{\mu}\l(G_{\bz\bar{\lz}}\r),\quad
 \varGamma_{\bz\gz}^{\az}=G^{\bar{\lz}\az}G_{\bz\bar{\lz}\gz}.
\end{equation}

Let $\ttz :\, \CV \rightarrow \CH$ be a complex horizontal map locally defined by $\ttz(\dot\pa_{\az}) = \dz_{\az}$.
We can transfer the Hermitian structure $\la ~, ~\ra$ on $\CH$ just by setting
$$
\l\la H , K \r\ra_v=\l\la \ttz^{-1}\l(H\r),\ttz^{-1}\l(K\r)\r\ra_v, \q
\forall H, \; K\in \CH_v.
$$
Furthermore, there is a Sasaki--type lift Hermitian structure \cite{Mu}
\beeq\label{stl}
\tilde{G}= G_{\az\bar{\bz}}\mathrm{d}z^{\az} \otimes \mathrm{d}\bar{z}^{\bz}+ G_{\az\bar{\bz}} \dz v^{\az} \otimes \dz\bar{v}^{\bz}
\eneq
on $\tilde{M}$. We can also extend $D$ to a complex linear connection on $\CH$ by setting
\begin{equation}
D_{X}H=\ttz(D_X(\ttz^{-1}H)), \q H\in \CX(\CH),\ \ X\in T_{\BC}\tilde{M}.
\end{equation}

The $(2,0)$-torsion and $(1,1)$-torsion of the Chern-Finsler connection are denoted by $\tz=\tz^{\mu}\otimes\dz_{\mu}$ and $\tau=\tau^{\az}\otimes \dot{\pa}_{\az}$ respectively, where
\begin{equation}\label{te}
\begin{split}
\tz^{\mu}&=\d{1}{2}\l(\varGamma^{\mu}_{\nu;\sz}-\varGamma^{\mu}_{\sz;\nu}\r)
\mathrm{d}z^{\sz}\wedge \mathrm{d}z^{\nu}+\varGamma^{\mu}_{\nu\gz}\dz v^{\gz}\wedge \mathrm{d}z^{\nu},\\
\tau^{\az}&=-\dz_{\bar{\nu}}\l(\varGamma^{\az}_{;\mu}\r)
\mathrm{d}z^{\mu}\wedge \mathrm{d}\bar{z}^{\nu}-\dot{\pa}_{\bar{\bz}}\l(\varGamma^{\az}_{;\mu}\r)
\mathrm{d}z^{\mu}\wedge \dz\bar{z}^{\nu}.
\end{split}
\end{equation}
\begin{definition}{\normalfont\cite{AP}}
We call $F$ strongly K\"ahler if $\tz(H,K)=0, \, \forall H,K\in \CH$;
K\"ahler if $\tz(H,\chi)=0, \, \forall H\in \CH$; weakly K\"ahler if $\la\tz(H,\chi),\chi\ra=0,
\, \forall H\in \CH$, where $\chi=v^\alpha\delta_\alpha$ is the horizontal radical vector field.
\end{definition}

In local coordinates, $F$ is strongly K\"ahler if and only if
$\varGamma_{\bz;\mu}^{\az}=\varGamma_{\mu;\bz}^{\az}$;
is K\"ahler if and only if $\big(\varGamma_{\bz;\mu}^{\az}-\varGamma_{\mu;\bz}^{\az}\big)v^{\bz}=0$;
is weakly K\"ahler if and only if $G_{\az}\big(\varGamma_{\bz;\mu}^{\az}-\varGamma_{\mu;\bz}^{\az}\big)
v^{\bz}=0$. Chen and Shen \cite{CS} proved K\"ahler-Finsler metrics are actually strongly K\"ahler.

The curvature operator of the Chern-Finsler connection $D$ is given by
\begin{equation}
\Omega=\Omega^{\az}_{\bz}\otimes \big(\mathrm{d}z^{\bz}\otimes\dz_{\az}
+\dz{v}^{\bz}\otimes\dot\pa_{\az}\big),
\end{equation}
where $\Omega_{\bz}^{\az}=\bar\pa\tz_{\bz}^{\az}.$
In local coordinates, $\Omega_{\bz}^{\az}$ can be decomposed as
\begin{equation}
\begin{split}
\Omega_{\bz}^{\az}
=&\Omega^{\az}_{\bz;\mu\bar{\nu}}\mathrm{d}z^{\mu}\wedge \mathrm{d}\bar{z}^{\nu}
+\Omega^{\az}_{\bz\mu;\bar{\nu}}\dz v^{\mu}\wedge \mathrm{d}\bar{z}^{\mu}
+\Omega^{\az}_{\bz\bar{\nu};\mu}\mathrm{d}z^{\mu}\wedge\dz\bar{v}^{\nu}
+\Omega^{\az}_{\bz\mu\bar{\nu}}\dz v^{\mu}\wedge\dz \bar{v}^{\nu},
\end{split}
\end{equation}
where
\begin{equation}
\begin{split}
&\Omega^{\az}_{\bz;\mu\bar{\nu}}=-\dz_{\bar{\nu}}\l(\varGamma^{\az}_{\bz;\mu}\r)
-\varGamma^{\az}_{\bz \gz}\dz_{\bar\nu}\l(\varGamma^{\gz}_{;\mu}\r), \ \
\Omega^{\az}_{\bz \mu;\bar \nu}= -\dz_{\bar\nu}\l(\varGamma^{\az}_{\bz \mu}\r),\\
&\Omega^{\az}_{\bz\bar{\nu};\mu}=-\dot{\pa}_{\bar\nu}\l(\varGamma^{\az}_{\bz;\mu}\r)
-\varGamma^{\az}_{\bz\gz}\dot\pa_{\bar \nu}\l(\varGamma^{\gz}_{;\mu}\r), \ \
\Omega^{\az}_{\bz\mu\bar{\nu}}=-\dot\pa_{\bar \nu}\l(\varGamma^{\az}_{\bz\mu}\r).
\end{split}
\end{equation}
We denote by $\Omega_{\az\bar{\bz};\mu\bar{\nu}}=G_{\gz\bar{\bz}}\Omega^{\gz}_{\az;\mu\bar{\nu}}$ the holomorphic sectional curvature tensors of $D$.
Locally,
\begin{equation}
\Omega_{\az\bar{\bz};\mu\bar{\nu}}
=-\dz_{\bar{\nu}}\dz_{\mu}\l(G_{\az\bar{\bz}}\r)+\dz_{\mu}\l(G_{\az\bar{\lz}}\r)
G^{\bar{\lz}\kappa}\dz_{\bar{\nu}}\l(G_{\kappa\bar{\bz}}\r)
-G_{\az\bar{\bz}\sz}\dz_{\bar\nu}\l(\varGamma^{\sz}_{;\mu}\r).
\end{equation}
The horizontal holomorphic flag curvature $K_{D}(H)$ of $F$ along a horizontal vector $H=H^{\az} \dz_{\az}\in\CH_v$ is defined by
\begin{equation}
K_{D}(H)=\d{\la\ooz(H,\bar{H})H,H\ra_v}{\la H,H\ra_v}.
\end{equation}
The holomorphic sectional curvature $K_{D}(v)$ of $F$ along $v\in \tilde{M}$ is defined by
\begin{equation}
K_{D}(v)=\d{\la \Omega\l(\chi,\bar{\chi}\r)\chi,\chi\ra_v}{G(v)^2}.
\end{equation}
The Ricci curvatures and scalar curvatures of the Chern-Finsler connection $D$ are, respectively, given by
\beeq
\mathbf{Ric}_{D} =\sqrt{-1}\ooz _{\mu\bar{\nu}}\mathrm{d}z^{\mu}\wedge \mathrm{d}\bar{z}^{\nu}\q
with \q \ooz _{\mu\bar{\nu}}=G^{\bar{\bz}\az}\ooz_{\az\bar\bz;\mu\bar\nu},
\eneq
and
\beeq
s_{D} =G^{\bar{\bz}\az}G^{\bar{\nu}\mu}\ooz_{\az\bar\bz;\mu\bar\nu}.
\eneq

\subsection{Balanced complex Finsler metrics}

A strongly pseudoconvex complex Finsler metric $F$
can induce a Hermitian metric 
$$
\tilde{G}= G_{\az\bar{\bz}}\mathrm{d}z^{\az} \otimes \mathrm{d}\bar{z}^{\bz}
+ (\log G)_{\az\bar{\bz}} \dz v^{\az} \otimes \dz\bar{v}^{\bz}.
$$
on the projective tangent bundle $\BP\tilde{M}=\tilde{M}/(\BC\backslash \{0\})$.
If we set $\oz_\CV=\sqrt{-1}(\log G)_{\az\bar{\bz}}\dz v^\az\wedge \dz\bar{v}^{\bz}$ and $\oz_\CH=\sqrt{-1}G_{\az\bar{\bz}}\mathrm{d}z^\az\wedge \mathrm{d}\bar{z}^\bz$,
then the invariant volume form of $\BP\tilde{M}$ is \cite{ZZ}
\begin{equation}
\mathrm{d}\mu_{\BP\tilde{M}}=\d{\oz_{\CV}^{n-1}}{(n-1)!}\wedge \d{\oz_{\CH}^n}{n!}.
\end{equation}

Let $\CA^{p,q}$ be the set of smooth horizontal $(p,q)$-forms on $\BP\tilde{M}$, that is, all coefficients of any $\varphi\in\CA^{p,q}$ are zero homogeneous with respect to fibre coordinates $v$. In local coordinates,
$$
\varphi=\d{1}{p!q!}\varphi_{\az_1\cdots\az_p\bar{\bz}_1\cdots\bar{\bz}_q}\mathrm{d}z^{\az_1}\wedge\cdots\wedge \mathrm{d}z^{\az_p}\wedge \mathrm{d}\bar{z}^{\bz_1}\wedge\cdots\wedge \mathrm{d}\bar{z}^{\bz_q}.
$$
If $M$ is compact, then for any $\varphi,\, \psi\in\CA^{p,q}$, there is a natural Hermitian inner product on $\CA^{p,q}$ given by
\beeq
(\varphi, \psi)_{\BP\tilde{M}}=\int_{\BP\tilde{M}}\la \varphi, \psi \ra_{\BP\tilde{M}}
\mathrm{d}\mu_{\BP\tilde{M}},
\eneq
where
\beeq
\la \varphi, \psi \ra_{\BP\tilde{M}}
=\d{1}{p!q!}\varphi_{\az_1\cdots\az_p\bar{\bz}_1\cdots\bar{\bz}_q}
G^{\bar{\gz}_1\az_1}\cdots G^{\bar{\gz}_p\az_p}
G^{\bar{\bz}_1\dz_1}\cdots G^{\bar{\bz}_q\dz_q}\ov{\psi_{\gz_1\cdots\gz_p\bar{\dz}_1\cdots\bar{\dz}_q}}.
\eneq

Let $\varphi\in\CA^{p,q}$, then two horizonal operator ${\pa}_{\CH}: \CA^{p,q}\rightarrow\CA^{p+1,q}$ and $\bar{\pa}_{\CH}: \CA^{p,q}\rightarrow\CA^{p,q+1}$
are defined by
\[
\ali
{\pa}_{\CH}\varphi&=\d{1}{(p+1)!q!}\suml_{j=1}^{p+1}(-1)^{j-1}\dz_{{\az}_j}
\l(\varphi_{\az_1\cdots\widehat{{\az}_j}\cdots\az_{p+1}\bar{\bz}_1\cdots\bar{\bz}_{q}}\r)
\mathrm{d}z^{\az_1}\wedge\cdots\wedge \mathrm{d}z^{\az_{p+1}}\wedge \mathrm{d}\bar{z}^{\bz_1}\wedge\cdots\wedge \mathrm{d}\bar{z}^{\bz_{q}},\\
\bar{\pa}_{\CH}\varphi&=\d{1}{p!(q+1)!}\suml_{j=1}^{q+1}(-1)^{p+j-1}\dz_{\bar{\bz}_j}
\l(\varphi_{\az_1\cdots\az_p\bar{\bz}_1\cdots\widehat{\bar{\bz}_j}\cdots\bar{\bz}_{q+1}}\r)
\mathrm{d}z^{\az_1}\wedge\cdots\wedge \mathrm{d}z^{\az_p}\wedge \mathrm{d}\bar{z}^{\bz_1}\wedge\cdots\wedge \mathrm{d}\bar{z}^{\bz_{q+1}},
\eali
\]
where $\az_1\cdots\widehat{{\az}_j}\cdots\az_{p+1}$ is the sequence which we get from $\az_1\cdots\az_{p+1}$ by suppressing ${\az}_j$.
Let us denote by $\mathrm{d}_{\CH}=\pa_{\CH} +\bar{\pa}_{\CH}$. We can define the formal adjoint operator $\bar{\pa}_{\CH}^*$ (resp. ${\pa}_{\CH}^*$) of $\bar{\pa}_{\CH}$ (resp. ${\pa}_{\CH}$) with respect to
the inter product $(\,,\, )_{\BP\tilde{M}}$. In \cite{XZQ}, the authors defined the Hodge star operator $*$
to better represent adjoint operators ${\pa}_{\CH}^*$ and $\bar{\pa}_{\CH}^*$. The operator
$*: \CA^{p,q} \rightarrow \CA^{n-q,n-p}$ is defined by the relation
\beeq	
\int_{\BP\tilde{M}}\varphi \wedge *\bar{\psi} \wedge \d{\oz_{\CV}^{n-1}}{(n-1)!}
= (\varphi,\psi)_{\BP\tilde{M}},\q \varphi,\,\psi \in \CA^{p,q}.
\eneq
Then \cite{XZQ}
\begin{itemize}
\item[{\rm (1)}] $\ov{*\psi}=*\bar{\psi}$;
\item[{\rm (2)}] $**\psi=(-1)^{p+q}\psi$;
\item[{\rm (3)}] ${\pa}_{\CH}^*=-*\bar{\pa}_{\CH} *$, and $\bar{\pa}_{\CH}^*=-*{\pa}_{\CH}*$.
\end{itemize}

\begin{definition}
A compact strongly pseudoconvex complex Finsler manifold $(M,F)$ is called balanced if
the horizontal fundamental form $\oz_{\CH}$ is horizonal co-closed, i.e., $\mathrm{d}_{\CH}^*\oz_{\CH} = 0$, where $\mathrm{d}_{\CH}^*=\pa_{\CH}^* +\bar{\pa}_{\CH}^*$.
\end{definition}

By setting the horizonal torsion tensor
\beeq
S^\gz_{\az\bz}=\d{1}{2}\l(\varGamma^{\gz}_{\az;\bz}-\varGamma^{\gz}_{\bz;\az}\r),
\eneq
we introduce the following tensor \cite{LQX}
\beeq
S_\az=\sum_{\gz=1}^n S^\gz_{\az\gz}.
\eneq
Put
\[
L_{\az\bar{\bz}}^{\gz}=\d{1}{2}G^{\bar{\lz}\gz}\l[\dz_{\bar\bz}(G_{\az\bar{\lz}})
-\dz_{\bar\lz}(G_{\az\bar{\bz}})\r],
\]
then
\beeq
G^{\bar{\bz}\az}L_{\az\bar{\bz}}^{\gz}G_{\gz\bar{\lz}}
=S_{\bar\lz}=-\sum_{\gz=1}^nL_{\gz\bar{\lz}}^{\gz}.
\eneq
 By Theorem 3.5 in \cite{ZZ2}, we have
\beeq\label{24}
{\pa}_{\CH}^*\oz_{\CH}=2\sqrt{-1} S_{\bar\az} \mathrm{d}\bar{z}^{\az}{\q\rm and\q}\bar{\pa}_{\CH}^*\oz_{\CH}=-2\sqrt{-1} S_{\az} \mathrm{d}z^{\az}.
\eneq
We set a horizontal (1,0)-form $\mathcal{S}=S_{\az}\mathrm{d}z^\az$.
A direct computation shows
\[
*\CS=-\sqrt{-1}\CS\wedge \f{\oz_{\CH}^{n-1}}{(n-1)!}.
\]
Since $*\oz_{\CH}=\f{\oz_{\CH}^{n-1}}{(n-1)!}$, we have
\beeq
\pa_{\CH}\oz_{\CH}^{n-1}=-2(n-1)!\sqrt{-1}*\CS=-2\CS\wedge \oz_{\CH}^{n-1}.
\eneq
Hence,  we have
\begin{prop}
The following four statements are equivalent:
\begin{itemize}
\item[{\rm (a)}] $(M,F)$ is a balanced complex Finsler manifold;
\item[{\rm (b)}] $\CS=0${\rm(}or $S_\az=0$, or $\suml_{\gz=1}^nL_{\gz\bar{\az}}^{\gz}=0${\rm)};
\item[{\rm (c)}] $G^{\bar{\bz}\az}L_{\az\bar{\bz}}^{\gz}=0$;
\item[{\rm (d)}] $\mathrm{d}_{\CH}\oz_{\CH}^{n-1}=0$.
\end{itemize}
\end{prop}

Clearly, a K\"ahler-Finsler metric on a compact complex manifold must be balanced. When ${\rm dim}_{\BC}(M)=2$, $S_{1}=S_2=0$ means
$S^1_{12}=S^2_{12}=0$, so balanced complex Finsler surfaces are K\"ahler-Finsler.
It is natural to find a sufficient condition for a balanced complex Finsler metric to be K\"ahler-Finsler.

\subsection{Rund K\"ahler-Finsler-like metrics}

The complex Rund connection of a strongly pseudoconvex complex Finsler manifold $(M,F)$ was first introduced in \cite{Ru}, which is also a generalization of the Hermitian connection.
Denote by $\hat{D}:\CX(\CV)\rightarrow \CX(T_{\BC}^{*}{\tilde M}\otimes \CV)$ the complex Rund connection.
Its connection 1-forms are given by
\begin{equation}
\hat{\tz}_{\bz}^{\az}=G^{\bar{\lz}\az}\partial_\CH G_{\bz\bar{\lz}}
=\varGamma_{\bz;\mu}^{\az}\mathrm{d}z^{\mu}.
\end{equation}
Locally, the curvature form of the complex Rund connection $\hat{D}$ is \cite{SZ}
$$
\hat{\ooz}_\bz^\az=\hat{\ooz}^\az_{\bz;\mu\bar{\nu}}\mathrm{d}z^\mu\wedge \mathrm{d}{\bar z}^\nu+\hat{\ooz}^\az_{\bz\nu;\mu}
\mathrm{d}z^\mu\wedge \dz v^{\nu}+\hat{\ooz}^\az_{\bz\bar{\nu};\mu}\mathrm{d}z^\mu\wedge\dz\bar{v}^{\nu},
$$
where
\begin{equation}
\hat{\ooz}^\az_{\bz;\mu\bar{\nu}}=-\dz_{\bar \nu}\l(\varGamma_{\bz;\mu}^{\az}\r), \q
\hat{\ooz}^\az_{\bz\nu;\mu}= -\dot\pa_{\nu}\l(\varGamma_{\bz;\mu}^{\az}\r), \q
\hat{\ooz}^\az_{\bz\bar{\nu};\mu}=-\dot{\pa}_{\bar \nu}\l(\varGamma_{\bz;\mu}^{\az}\r).
\end{equation}

Clearly, $\Omega^{\az}_{\bz;\mu\bar{\nu}}=\hat{\Omega}^{\az}_{\bz;\mu\bar{\nu}}-\varGamma^{\az}_{\bz \gz}\dz_{\bar\nu}\l(\varGamma^{\gz}_{;\mu}\r)$. Notice that if $F$ is K\"ahler-Finsler then
$\hat{\Omega}^{\az}_{\bz;\mu\bar{\nu}}=\hat{\Omega}^{\az}_{\mu;\bz\bar{\nu}}$, but ${\Omega}^{\az}_{\bz;\mu\bar{\nu}}={\Omega}^{\az}_{\mu;\bz\bar{\nu}}$ may fail to be true.
In \cite{YZ}, the authors call a Hermitian metric $h$ is K\"ahler-like if its Chern curvature tensors $R^h$
satisfy $R^h_{X\bar{Y}Z\bar{W}}=R^h_{Z\bar{Y}X\bar{W}}$ for any type (1,0) tangent vectors $X, \,Y, \,Z$, and $W$. In \cite{Ba}, a K\"ahler-like manifold is called a Hermitian manifold with holomorphic torsion tensors. Now we extend this definition to complex Finsler geometry. We shall define a class of complex Finsler metrics that are more general than K\"ahler-Finsler metrics.

\begin{definition}
A strongly pseudoconvex complex Finsler metric $F$ is called Rund K\"ahler-Finsler-like if
the horizontal part of the curvature form of the complex Rund connection satisfies  $\hat{\ooz}^\az_{\bz;\mu\bar{\nu}}=\hat{\ooz}^\az_{\mu;\bz\bar{\nu}}$ for every $\az,\, \bz, \,
\mu, \, \nu=1,\cdots, n$.
\end{definition}

Clearly,  every K\"ahler-Finsler metric is necessarily Rund K\"ahler-Finsler-like, and $F$ is Rund K\"ahler-Finsler-like is equivalent to the equations $\dz_{\bar{\nu}}\big(S^\az_{\bz\mu}\big)=0$. A natural question is whether a Rund K\"ahler-Finsler-like metric $F$ is K\"ahler-Finsler.

\section{The canonical connection}
\setcounter{equation}{0}

Munteanu \cite{Mu} introduced a metric connection $\nabla$ with respect to the Sasaki-type lift Hermitian structure \eqref{stl} with horizontal and vertical zero torsions.
Connection coefficients of $\nabla$ are
\begin{equation}\label{ccs}
\begin{split}
L_{\bz\mu}^{\az} &=\d{1}{2}G^{\bar{\lz}\az}\l(\dz_{\mu}(G_{\bz\bar{\lz}})+\dz_{\bz}(G_{\mu\bar{\lz}})\r),\quad
C_{\bz\mu}^{\az}=\varGamma_{\bz\mu}^{\az}, \\
L_{\bz\bar{\mu}}^{\az}&=\d{1}{2}G^{\bar{\lz}\az}\l(\dz_{\bar\mu}(G_{\bz\bar{\lz}})
-\dz_{\bar\lz}(G_{\bz\bar{\mu}})\r),\quad
C_{\bz\bar\mu}^{\az}=0.
\end{split}
\end{equation}
Then connection 1-forms of $\nabla$ are given by
\begin{equation}
\oz_{\bz}^{\az}=L_{\bz\mu}^{\az}\mathrm{d}z^{\mu}+L_{\bz\bar\mu}^{\az}\mathrm{d}\bar{z}^{\mu}
+C_{\bz\mu}^{\az}\dz{v}^{\mu}.
\end{equation}
Munteanu \cite{Mu} called the connection $\nabla$ the canonical connection induced by the Chern-Finsler connection $D$. The connection $\nabla$ is a natural generalization of the complexified Levi-Civita connection on the holomorphic tangent bundle of a Hermitian manifold {\rm\cite{HLY}}.
Clearly, the connection $\nabla$ and the Chern-Finsler connection $D$ coincide if and only if $F$ is a K\"ahler-Finsler metric.

The holomorphic sectional curvature tensor of $\nabla$ is defined by
\begin{equation}\label{chsc}
R_{\az\bar\bz\mu\bar\nu}=\l\la\nabla_{\dz_{\mu}}\nabla_{\dz_{\bar\nu}}\dz_{\az}
-\nabla_{\dz_{\bar\nu}}\nabla_{\dz_{\mu}}\dz_{\az}-\nabla_{[\dz_{\mu},\dz_{\bar\nu}]}\dz_{\az},\dz_{\bz}
\r\ra_v.
\end{equation}
The horizontal holomorphic flag curvature $ K_{\nabla}(H)$ of $\nabla$ along a horizontal vector $H=H^{\az} \dz_{\az}\in\CH_v$ is defined by
\begin{equation}\label{4.2}
K_{\nabla}(H)=\d{R_{\az\bar\bz\mu\bar\nu}(v)H^{\az}\bar{H}^{\bz}H^{\mu}\bar{H}^{\nu}}{\l(G_{\az\bar\bz}(v)H^{\az}\bar{H}^{\bz}\r)^2}.
\end{equation}
The holomorphic sectional curvature $K_{\nabla}(v)$ of $\nabla$ along $v\in \tilde{M}$ is defined by
\begin{equation}\label{4.3}
K_{\nabla}(v)=\d{R_{\az\bar\bz\mu\bar\nu}(v)v^{\az}\bar{v}^{\bz}v^{\mu}\bar{v}^{\nu}}{G(v)^2}.
\end{equation}
The Ricci curvature and scalar curvature of the canonical connection $\nabla$ are
\beeq
\mathbf{Ric}_{\nabla} =\sqrt{-1}R _{\mu\bar{\nu}}\mathrm{d}z^{\mu}\wedge \mathrm{d}\bar{z}^{\nu}\q
with \q R _{\mu\bar{\nu}}=G^{\bar{\bz}\az}R_{\az\bar\bz\mu\bar\nu},
\eneq
\beeq\label{sna}
s_{\nabla} =G^{\bar{\bz}\az}G^{\bar{\nu}\mu}R_{\az\bar\bz\mu\bar\nu}.
\eneq

Now we give the explicit formula of $R_{\az\bar\bz\mu\bar\nu}$.

\begin{lem}
Let $(M,F)$ be a strongly pseudoconvex complex Finsler manifold. Then
\begin{equation}\label{hsc}
\begin{split}
&R_{\az\bar\bz\mu\bar\nu}\\
=&-\f{1}{2}\l[\dz_{\bar\nu}\dz_{\az}(G_{\mu\bar\bz})+\dz_{\bar\bz}\dz_{\mu}(G_{\az\bar\nu})
+\dz_{\bar\nu}\l(\varGamma^{\sz}_{;\mu}\r)G_{\az\bar\bz\sz}
+\dz_{\bar\bz}\l(\varGamma^{\sz}_{;\mu}\r)G_{\az\bar\nu\sz}\r]\\
&+\f{1}{4}\l[\dz_{\mu}(G_{\az\bar\lz})+\dz_{\az}(G_{\mu\bar\lz})\r]G^{\bar\lz\gz}
\l[\dz_{\bar\nu}(G_{\gz\bar\bz})+\dz_{\bar\bz}(G_{\gz\bar\nu})\r]\\
&-\f{1}{4}\big[\dz_{\bar\nu}(G_{\az\bar\lz})-\dz_{\bar\lz}(G_{\az\bar\nu})\big]G^{\bar\lz\gz}
\l[\dz_{\mu}(G_{\gz\bar\bz})-\dz_{\gz}(G_{\mu\bar\bz})\r]\\
&+\f{1}{2}\l[\dz_{\mu}\big(\varGamma^{\bar\tau}_{;\bar\bz}\big)G_{\az\bar\nu\bar\tau}
-\dz_{\mu}\l(\varGamma^{\bar\tau}_{;\bar\nu}\r)G_{\az\bar\bz\bar\tau}\r].
\end{split}
\end{equation}
\end{lem}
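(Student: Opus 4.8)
The plan is to expand the defining expression \eqref{4.1} directly in the local frame $\{\dz_\gz,\dz_{\bar\gz}\}$ and read the connection coefficients off \eqref{ccs}. The observation that organizes the whole computation is that the inner product in \eqref{4.1} is taken against $\dz_{\bar\bz}$, and by the orthogonality relations only the holomorphic part survives: for a field written as $A^\gz\dz_\gz+B^{\bar\gz}\dz_{\bar\gz}$ one has $\l(A^\gz\dz_\gz+B^{\bar\gz}\dz_{\bar\gz},\dz_{\bar\bz}\r)=\f12 A^\gz G_{\gz\bar\bz}$, since $(\dz_{\bar\gz},\dz_{\bar\bz})=0$. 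Hence I only need to track the $\dz_\gz$-components of the three bracketed terms, and the factor $2$ in \eqref{4.1} will absorb the $\f12$, leaving a clean $G_{\gz\bar\bz}$ in front.

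First I would record the first covariant derivatives from the $1$-forms: reading off \eqref{ccs}, one has $\na_{\dz_\mu}\dz_\az=L_{\az\mu}^\gz\dz_\gz$ (no anti-holomorphic part, since $\oz_\az^{\bar\gz}$ carries only a $d\bar z$ term), while $\na_{\dz_{\bar\nu}}\dz_\az=L_{\az\bar\nu}^\gz\dz_\gz+L_{\az\bar\nu}^{\bar\gz}\dz_{\bar\gz}$. Next I would compute the bracket; since $\dz_\mu=\pa_\mu-\varGamma^\gz_{;\mu}\dot\pa_\gz$, a direct computation gives the purely vertical field $[\dz_\mu,\dz_{\bar\nu}]=\dz_{\bar\nu}(\varGamma^\sz_{;\mu})\dot\pa_\sz-\dz_\mu(\varGamma^{\bar\sz}_{;\bar\nu})\dot\pa_{\bar\sz}$. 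The vertical action of $\na$ on $\dz_\az$ is governed by the $C$-coefficients, $\na_{\dot\pa_\sz}\dz_\az=C^\gz_{\az\sz}\dz_\gz=\varGamma^\gz_{\az\sz}\dz_\gz$ and $\na_{\dot\pa_{\bar\sz}}\dz_\az=0$, so only the first half of the bracket contributes. Assembling the two iterated derivatives and the bracket term, keeping only $\dz_\gz$-components (and using the conjugation relations of \eqref{ccs} to evaluate $\na_{\dz_\mu}\dz_{\bar\gz}$, whose holomorphic part is $L_{\mu\bar\gz}^\gz\dz_\gz$), yields the intermediate identity
$$
R_{\az\bar\bz\mu\bar\nu}=G_{\gz\bar\bz}\Big(\dz_\mu L_{\az\bar\nu}^\gz-\dz_{\bar\nu}L_{\az\mu}^\gz+L_{\az\bar\nu}^\kz L_{\kz\mu}^\gz-L_{\az\mu}^\kz L_{\kz\bar\nu}^\gz+L_{\az\bar\nu}^{\bar\kz}L_{\mu\bar\kz}^\gz-\dz_{\bar\nu}(\varGamma^\sz_{;\mu})\varGamma^\gz_{\az\sz}\Big).
$$

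Finally I would substitute the explicit forms of $L$ and $C=\varGamma$ from \eqref{ccs} and simplify. The recurring tools are $G_{\gz\bar\bz}G^{\bar\lz\gz}=\dz^\lz_\bz$, the inverse-metric rule $\dz_\mu(G^{\bar\lz\gz})=-G^{\bar\lz\kz}\dz_\mu(G_{\kz\bar\rho})G^{\bar\rho\gz}$, the symmetry $L^\gz_{\az\mu}=L^\gz_{\mu\az}$, and the conjugation relation $L_{\az\bar\nu}^{\bar\kz}=\ov{L_{\nu\bar\az}^{\kz}}$. Contracting $G_{\gz\bar\bz}$ against the factor $G^{\bar\lz\gz}$ inside each $L$ collapses one index and splits every term into a Hessian piece $\f12\dz\dz(G)$ and a quadratic piece built from $G^{-1}$ and first derivatives; in particular $L_{\az\bar\nu}^{\bar\kz}L_{\mu\bar\kz}^\gz G_{\gz\bar\bz}$ reproduces the fourth line of \eqref{hsc}, the two surviving $\dz_{\bar\nu}\dz_\mu(G_{\az\bar\bz})$ Hessians from $\dz_\mu L_{\az\bar\nu}^\gz$ and $\dz_{\bar\nu}L_{\az\mu}^\gz$ cancel, and the bracket term delivers $G_{\gz\bar\bz}\varGamma^\gz_{\az\sz}=G_{\az\bar\bz\sz}$. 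The main obstacle is the reorganization of the remaining Hessian and quadratic pieces into the exact right-hand side of \eqref{hsc}: because $\dz_\mu$ and $\dz_{\bar\nu}$ do not commute, the very bracket computed above must be invoked a second time to interchange $\dz_\mu\dz_{\bar\bz}(G_{\az\bar\nu})$ with $\dz_{\bar\bz}\dz_\mu(G_{\az\bar\nu})$ (and similarly for $\dz_\mu\dz_{\bar\nu}(G_{\az\bar\bz})$). It is precisely this commutation that produces the swapped-index Hessian combination $\dz_{\bar\nu}\dz_\az(G_{\mu\bar\bz})+\dz_{\bar\bz}\dz_\mu(G_{\az\bar\nu})$ (the swap originating from the symmetric second summand of $L^\gz_{\az\mu}$), fixes the coefficients of the $\dz_{\bar\nu}(\varGamma^\sz_{;\mu})G_{\az\bar\bz\sz}$ and $\dz_{\bar\bz}(\varGamma^\sz_{;\mu})G_{\az\bar\nu\sz}$ terms to $-\f12$ against the $-1$ coming from the bracket contribution, and generates the conjugate-type terms $\f12\dz_\mu(\varGamma^{\bar\tau}_{;\bar\bz})G_{\az\bar\nu\bar\tau}-\f12\dz_\mu(\varGamma^{\bar\tau}_{;\bar\nu})G_{\az\bar\bz\bar\tau}$. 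Once these pieces are matched line by line, \eqref{hsc} follows; keeping the many index names straight is the only genuine difficulty.
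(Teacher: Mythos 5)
Your proposal is correct and follows essentially the same route as the paper's own proof: expand \eqref{4.1} in the frame $\{\dz_\gz,\dz_{\bar\gz}\}$ using \eqref{ccs}, pair against $\dz_{\bar\bz}$ so only holomorphic components survive (your intermediate identity is exactly the paper's, since $L^{\gz}_{\bar\kz\mu}=L^{\gz}_{\mu\bar\kz}$ and $G_{\gz\bar\bz}\varGamma^{\gz}_{\az\sz}=G_{\az\bar\bz\sz}$), and then invoke the commutator $[\dz_\mu,\dz_{\bar\nu}]$ a second time to reorder the mixed Hessians, which is precisely how the paper produces the $-\f{1}{2}$ coefficients and the $\dz_\mu\big(\varGamma^{\bar\tau}_{;\bar\bz}\big)G_{\az\bar\nu\bar\tau}-\dz_\mu\big(\varGamma^{\bar\tau}_{;\bar\nu}\big)G_{\az\bar\bz\bar\tau}$ terms.
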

\begin{proof}
Since
$$
\ali
\nabla_{\dz_{\mu}}\nabla_{\dz_{\bar\nu}}\dz_{\az}
=&\l(\dz_{\mu}\l(L_{\az\bar{\nu}}^{\gz}\r)+L_{\az\bar{\nu}}^{\sz}L_{\sz\mu}^{\gz}\r)\dz_{\gz}
,\\
\nabla_{\dz_{\bar\nu}}\nabla_{\dz_{\mu}}\dz_{\az}
=&\l(\dz_{\bar\nu}\l(L_{\az\mu}^{\gz}\r)+L_{\az\mu}^{\sz}L_{\sz\bar{\nu}}^{\gz}\r)\dz_{\gz},
\eali
$$
$$
\l\la\nabla_{[\dz_{\mu},\dz_{\bar\nu}]}\dz_{\az},\dz_{\bz}\r\ra_v
=G_{\az\bar{\bz}\sz}\dz_{\bar\nu}(\varGamma^{\sz}_{;\mu}),
$$
by \eqref{chsc} we have
\beeq
R_{\az\bar\bz\mu\bar\nu}=\l[\dz_{\mu}\l(L_{\az\bar{\nu}}^{\gz}\r)
-\dz_{\bar\nu}\l(L_{\az\mu}^{\gz}\r)+L_{\az\bar{\nu}}^{\sz}L_{\sz\mu}^{\gz}
-L_{\az\mu}^{\sz}L_{\sz\bar{\nu}}^{\gz}\r]G_{\gz\bar{\bz}}
-G_{\az\bar{\bz}\sz}\dz_{\bar\nu}(\varGamma^{\sz}_{;\mu}).\label{0}
\eneq
A direct computation shows that
\beeq
\ali
&\l(\dz_{\mu}\l(L_{\az\bar{\nu}}^{\gz}\r)+L_{\az\bar{\nu}}^{\sz}L_{\sz\mu}^{\gz}\r)G_{\gz\bar\bz}\\
=&\f{1}{2}\left\{\dz_{\bar\nu}\dz_{\mu}(G_{\az\bar\bz})-\dz_{\bar\bz}\dz_{\mu}(G_{\az\bar\nu})
+\l[\dz_{\mu},\dz_{\bar\nu}\r](G_{\az\bar\bz})-\l[\dz_{\mu},\dz_{\bar\bz}\r](G_{\az\bar\nu})\right\}\\
&-\f{1}{4}\big[\dz_{\bar\nu}(G_{\az\bar\lz})-\dz_{\bar\lz}(G_{\az\bar\nu})\big]G^{\bar\lz\gz}
\big[\dz_{\mu}(G_{\gz\bar\bz})-\dz_{\gz}(G_{\mu\bar\bz})\big],\label{1}\\
\eali
\eneq
and
\beeq
\ali
&-\l(\dz_{\bar\nu}\l(L_{\az\mu}^{\gz}\r)+L_{\az\mu}^{\sz}L_{\sz\bar{\nu}}^{\gz}\r)G_{\gz\bar{\bz}}\\
=&-\f{1}{2}\l[\dz_{\bar\nu}\dz_{\mu}(G_{\az\bar\bz})+\dz_{\bar\nu}\dz_{\az}(G_{\mu\bar\bz})\r]\\
&+\f{1}{4}\l[\dz_{\mu}(G_{\az\bar\lz})+\dz_{\az}(G_{\mu\bar\lz})\r]G^{\bar\lz\gz}
\l[\dz_{\bar\nu}(G_{\gz\bar\bz})+\dz_{\bar\bz}(G_{\gz\bar\nu})\r].\label{2}
\eali
\eneq
Since
$$
\l[\dz_{\mu},\dz_{\bar\nu}\r]=\dz_{\bar\nu}\l(\varGamma^{\sz}_{;\mu}\r)\dot{\pa}_{\sz}
-\dz_{\mu}\l(\varGamma^{\bar\tau}_{;\bar\nu}\r)\dot{\pa}_{\bar\tau},
$$
we have
\beeq
\ali
&\f{1}{2}\l[\l[\dz_{\mu},\dz_{\bar\nu}\r](G_{\az\bar\bz})-\l[\dz_{\mu},\dz_{\bar\bz}\r](G_{\az\bar\nu})\r]
-G_{\az\bar{\bz}\sz}\dz_{\bar\nu}(\varGamma^{\sz}_{;\mu})\\
=&\f{1}{2}\l[-\dz_{\bar\nu}\l(\varGamma^{\sz}_{;\mu}\r)G_{\az\bar\bz\sz}
-\dz_{\mu}\l(\varGamma^{\bar\tau}_{;\bar\nu}\r)G_{\az\bar\bz\bar\tau}
-\dz_{\bar\bz}\l(\varGamma^{\sz}_{;\mu}\r)G_{\az\bar\nu\sz}
+\dz_{\mu}\big(\varGamma^{\bar\tau}_{;\bar\bz}\big)G_{\az\bar\nu\bar\tau}\r].\label{3}\\
\eali
\eneq
By substituting \eqref{1}-\eqref{3} into \eqref{0} we get \eqref{hsc}. This completes the proof.
\end{proof}
\begin{rem}\label{rem3.2}
We can check $\ov{R_{\az\bar\bz\mu\bar\nu}}=R_{\bz\bar\az\nu\bar\mu}$.
\end{rem}

Extending the product $\la\,,\ra_v$ to $T_{\BC}\tilde{M}$ and being denoted by
$$
\ali
&\big(\dz_{\az},\dz_{\bar\bz}\big)_v=\big(\dz_{\bar\bz},\dz_{\az}\big)_v=G_{\az\bar\bz},\\
&\big(\dot\pa_{\az},\dot\pa_{\bar\bz}\big)_v=\big(\dot\pa_{\bar\bz},\dot\pa_{\az}\big)_v=G_{\az\bar\bz},\\
&\big(\dz_{\az},\dz_{\bz}\big)_v=\big(\dot\pa_{\az},\dot\pa_{\bz}\big)_v=0,\\
&\big(\dz_{\az},\dot\pa_{\bz}\big)_v=\big(\dz_{\az},\dot\pa_{\bar\bz}\big)_v=0.\\
\eali
$$
Then we can extend the canonical $\nabla$ to $T_{\BC}\tilde{M}$, denoted by $\nabla^{\mathbbm{c}}$, with connection 1-forms
\begin{equation}
\ali
\oz_{\bz}^{\az}=L_{\bz\mu}^{\az}\mathrm{d}z^{\mu}+L_{\bz\bar\mu}^{\az}\mathrm{d}\bar{z}^{\mu}
+C_{\bz\mu}^{\az}\dz{v}^{\mu},\q
\oz_{\bz}^{\bar{\az}}=L_{\bz\bar\mu}^{\bar{\az}}\mathrm{d}\bar{z}^{\mu},\\
\oz_{\bar\bz}^{\az}=L_{\bar\bz\mu}^{\az}\mathrm{d}z^{\mu},\q
\oz_{\bar\bz}^{\bar{\az}}=L_{\bar\bz\mu}^{\bar\az}\mathrm{d}{z}^{\mu}+L_{\bar\bz\bar\mu}^{\bar{\az}}\mathrm{d}\bar{z}^{\mu}
+C_{\bar\bz\bar\mu}^{\bar\az}\dz\bar{v}^{\mu},
\eali
\end{equation}
where $L_{\bar\bz{\mu}}^{\az}=L_{\mu\bar{\bz}}^{\az}=\ov{L_{\bar\mu{\bz}}^{\bar\az}}
=\ov{L_{{\bz}\bar\mu}^{\bar\az}}$, $L_{\bar\bz\bar\mu}^{\bar{\az}}=\ov{L_{\bz\mu}^{\az}}$, $C_{\bar\bz\bar{\mu}}^{\bar\az}=\ov{C_{{\bz}\mu}^{\az}}$. The connection $\nabla^{\mathbbm{c}}$ is a natural generalization of the complexified Levi-Civita connection on the complexified tangent bundle of a Hermitian manifold. As \eqref{chsc}-\eqref{sna}, we can define the holomorphic sectional curvature tensor $K_{\az\bar\bz\mu\bar\nu}$, the horizontal holomorphic flag curvature ${K}_{\nabla^{\mathbbm{c}}}(H)$, the holomorphic sectional curvature ${K}_{\nabla^{\mathbbm{c}}}(v)$,  the Ricci curvature $\mathbf{Ric} _{\nabla^{\mathbbm{c}}}=\sqrt{-1}K _{\mu\bar{\nu}}\mathrm{d}z^\mu\wedge \mathrm{d}\bar{z}^{\nu}$,
the scalar curvature $s _{\nabla^{\mathbbm{c}}}$ of the connection $\nabla^{\mathbbm{c}}$.

\begin{prop}
Let $(M,F)$ be a strongly pseudoconvex complex Finsler manifold. Then
\begin{equation}\label{hsc'}
K_{\az\bar\bz\mu\bar\nu}
={R}_{\az\bar\bz\mu\bar\nu}
-\f{1}{4}\big[\dz_{\bar\bz}(G_{\mu\bar\lz})-\dz_{\bar\lz}(G_{\mu\bar\bz})\big]G^{\bar\lz\gz}
\l[\dz_{\az}(G_{\gz\bar\nu})-\dz_{\gz}(G_{\az\bar\nu})\r].
\end{equation}
Hence we have
\begin{equation}
\l(K_{\az\bar\bz\mu\bar\nu}-{R}_{\az\bar\bz\mu\bar\nu}\r)H^{\az}\bar{H}^{\bz}K^{\mu}\bar{K}^{\nu}
\leq 0, \q \forall H, \, K \in \CH.
\end{equation}
\end{prop}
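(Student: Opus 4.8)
The plan is to recompute $\hat R_{\az\bar\bz\mu\bar\nu}$ by repeating the computation in the proof of Lemma 4.1 verbatim, but with $\hat\na=\na|_{T^{1,0}\tilde M}$ in place of the full complexified connection $\na$, and then to isolate the single summand where the two computations part company. First I would record the structural fact underlying the whole argument: by the vanishing relations $L_{\bz\mu}^{\bar\az}=C_{\bz\mu}^{\bar\az}=C_{\bz\bar\mu}^{\az}=0$ of \eqref{ccs}, one has $\na_{\dz_\mu}\dz_\gz=L_{\gz\mu}^\sz\dz_\sz\in\CH$, $\na_{\dot\pa_\sz}\dz_\az=\varGamma^\kappa_{\az\sz}\dz_\kappa\in\CH$ and $\na_{\dot\pa_{\bar\tau}}\dz_\az=0$, so that $\na$ and $\hat\na$ coincide on all such derivatives; only the mixed derivative $\na_{\dz_{\bar\nu}}\dz_\az=L_{\az\bar\nu}^\gz\dz_\gz+L_{\az\bar\nu}^{\bar\gz}\dz_{\bar\gz}$ carries a genuine $\bar\CH$-component $L_{\az\bar\nu}^{\bar\gz}\dz_{\bar\gz}$, which $\hat\na$ suppresses.

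The decisive point is that this suppressed component re-enters the $T^{1,0}$ part under the outer derivative $\na_{\dz_\mu}$: the expansion of $\na_{\dz_\mu}\bigl(L_{\az\bar\nu}^{\bar\gz}\dz_{\bar\gz}\bigr)$ contains $L_{\az\bar\nu}^{\bar\gz}L_{\bar\gz\mu}^\sz\dz_\sz$, whose pairing against $\dz_{\bar\bz}$ contributes exactly the summand
$$
L_{\az\bar\nu}^{\bar\sz}L_{\bar\sz\mu}^\gz G_{\gz\bar\bz}=-\f14\big[\dz_{\bar\bz}(G_{\mu\bar\sz})-\dz_{\bar\sz}(G_{\mu\bar\bz})\big]G^{\bar\sz\gz}\l[\dz_\az(G_{\gz\bar\nu})-\dz_\gz(G_{\az\bar\nu})\r]
$$
to $R_{\az\bar\bz\mu\bar\nu}$, as already evaluated inside the proof of Lemma 4.1. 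In every other block of the computation — the $\na_{\dz_{\bar\nu}}\na_{\dz_\mu}$ term and the Lie-bracket term $-2(\na_{[\dz_\mu,\dz_{\bar\nu}]}\dz_\az,\dz_{\bar\bz})_v$ — any $\bar\CH$-valued piece is created only at the final differentiation and is therefore annihilated on pairing with $\dz_{\bar\bz}$, so the truncation defining $\hat\na$ alters nothing there. Hence $\hat R_{\az\bar\bz\mu\bar\nu}$ equals $R_{\az\bar\bz\mu\bar\nu}$ with precisely this one summand removed, and moving it across the equality yields the identity \eqref{hsc'}.

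For the inequality I would contract \eqref{hsc'} with $H^\az\bar H^\bz K^\mu\bar K^\nu$ and set $Q_\gz:=\l[\dz_\az(G_{\gz\bar\nu})-\dz_\gz(G_{\az\bar\nu})\r]H^\az\bar K^\nu$, so that
$$
\l(R_{\az\bar\bz\mu\bar\nu}-\hat R_{\az\bar\bz\mu\bar\nu}\r)H^\az\bar H^\bz K^\mu\bar K^\nu=-\f14\,\ov{Q_\lz}\,G^{\bar\lz\gz}\,Q_\gz.
$$
The one identity to verify is that the first bracket of \eqref{hsc'}, contracted with $\bar H^\bz K^\mu$, equals $\ov{Q_\lz}$; this drops out of $\ov{G_{\az\bar\bz}}=G_{\bz\bar\az}$ and $\ov{\dz_\az(f)}=\dz_{\bar\az}(\bar f)$ after relabelling the dummy indices. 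Since $(G_{\gz\bar\lz})$ is positive definite, its inverse $(G^{\bar\lz\gz})$ is a positive definite Hermitian form, so $\ov{Q_\lz}G^{\bar\lz\gz}Q_\gz\ge 0$ and the right-hand side is $\le 0$.

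I expect the main obstacle to be the bookkeeping of the first step rather than any hard analysis: one must be sure that \emph{only} the displayed summand differs between the $\na$- and $\hat\na$-computations. This rests on two facts that must be checked with care — the vanishing relations in \eqref{ccs}, and the observation that a pairing against $\dz_{\bar\bz}$ kills every $\bar\CH$-component \emph{except} those first re-routed into $\CH$ by a second covariant derivative, which happens only through the one term identified above.
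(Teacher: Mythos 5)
Your proposal is correct and follows essentially the same route as the paper, which disposes of this proposition with the single remark that it "follows the proof of Lemma 4.1": the intended argument is precisely yours, namely that passing from $\na$ to $\hat\na=\na|_{T^{1,0}\tilde M}$ deletes exactly the summand $L_{\az\bar{\nu}}^{\bar\sz}L_{\bar{\sz}\mu}^{\gz}G_{\gz\bar{\bz}}$ (already evaluated in Lemma 4.1 as $-\f{1}{4}\big[\dz_{\bar\bz}(G_{\mu\bar\lz})-\dz_{\bar\lz}(G_{\mu\bar\bz})\big]G^{\bar\lz\gz}\l[\dz_{\az}(G_{\gz\bar\nu})-\dz_{\gz}(G_{\az\bar\nu})\r]$), since by \eqref{ccs} no other $\bar\CH$-component can re-enter the $\CH$-part before the final pairing. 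Your verification of the inequality via the Hermitian-conjugate identity for $Q_\gz$ and positive definiteness of $(G^{\bar\lz\gz})$ is exactly the (unstated) argument the paper relies on.
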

\begin{proof}
A direct computation shows that
$$
\ali
\nabla^{\mathbbm{c}}_{\dz_{\mu}}\nabla^{\mathbbm{c}}_{\dz_{\bar\nu}}\dz_{\az}
=&\l[\dz_{\mu}\l(L_{\az\bar{\nu}}^{\gz}\r)+L_{\az\bar{\nu}}^{\sz}L_{\sz\mu}^{\gz}
+L_{\az\bar{\nu}}^{\bar\sz}L_{\bar{\sz}\mu}^{\gz}\r]\dz_{\gz}\\
&+\l[\dz_{\mu}\l(L_{\az\bar{\nu}}^{\bar\gz}\r)
+L_{\az\bar{\nu}}^{\bar\sz}L_{\bar{\sz}\mu}^{\bar\gz}\r]\dz_{\bar\gz},\\
\nabla^{\mathbbm{c}}_{\dz_{\bar\nu}}\nabla^{\mathbbm{c}}_{\dz_{\mu}}\dz_{\az}
=&\l[\dz_{\bar\nu}\l(L_{\az\mu}^{\gz}\r)+L_{\az\mu}^{\sz}L_{\sz\bar{\nu}}^{\gz}\r]\dz_{\gz}
+L_{\az\mu}^{\sz}L_{\sz\bar{\nu}}^{\bar\gz}\dz_{\bar\gz},
\eali
$$
$$
\l(\nabla^{\mathbbm{c}}_{[\dz_{\mu},\dz_{\bar\nu}]}\dz_{\az},\dz_{\bar\bz}\r)_v
=G_{\az\bar{\bz}\sz}\dz_{\bar\nu}(\varGamma^{\sz}_{;\mu}),
$$
we have
\beeq
\ali
K_{\az\bar\bz\mu\bar\nu}
=&\l[\dz_{\mu}\l(L_{\az\bar{\nu}}^{\gz}\r)
-\dz_{\bar\nu}\l(L_{\az\mu}^{\gz}\r)+L_{\az\bar{\nu}}^{\sz}L_{\sz\mu}^{\gz}
+L_{\az\bar{\nu}}^{\bar\sz}L_{\bar{\sz}\mu}^{\gz}\r.\\
&\l.-L_{\az\mu}^{\sz}L_{\sz\bar{\nu}}^{\gz}\r]G_{\gz\bar{\bz}}
-G_{\az\bar{\bz}\sz}\dz_{\bar\nu}(\varGamma^{\sz}_{;\mu}).
\eali
\eneq
Observe that
$$
\ali
L_{\az\bar{\nu}}^{\bar\sz}L_{\bar{\sz}\mu}^{\gz}G_{\gz\bar{\bz}}
=-\f{1}{4}\big[\dz_{\bar\bz}(G_{\mu\bar\sz})-\dz_{\bar\sz}(G_{\mu\bar\bz})\big]G^{\bar\sz\gz}
\big[\dz_{\az}(G_{\gz\bar\nu})-\dz_{\gz}(G_{\az\bar\nu})\big].\\
\eali
$$
By Lemma 3.1, the proof of \eqref{hsc'} is completed.
\end{proof}

\begin{rem}
It follows from Remark \ref{rem3.2} and \eqref{hsc'} that $\ov{K_{\az\bar\bz\mu\bar\nu}}=K_{\bz\bar\az\nu\bar\mu}$.
But $K_{\az\bar\bz\mu\bar\nu}\neq K_{\mu\bar\nu \az\bar\bz}$.
We can check that
\[
\ali
K_{\az\bar\bz\mu\bar\nu}-K_{\mu\bar\nu\az\bar\bz}
=&\f{1}{2}\l[[\dz_{\az},\dz_{\bar\bz}]\l(G_{\mu\bar\nu}\r)
-[\dz_{\mu},\dz_{\bar\nu}]\l(G_{\az\bar\bz}\r)
+[\dz_{\az},\dz_{\bar\nu}]\l(G_{\mu\bar\bz}\r)\r.\\
&\l.-[\dz_{\mu},\dz_{\bar\bz}]\l(G_{\az\bar\nu}\r)\r]
+\dz_{\az}\l(\varGamma^{\bar\tau}_{;\bar\bz}\r)G_{\mu\bar\nu\bar\tau}
-\dz_{\mu}\l(\varGamma^{\bar\tau}_{;\bar\nu}\r)G_{\az\bar\bz\bar\tau}.\\
\eali
\]
\end{rem}
%
%
%
%
\section{Curvature relations}
\setcounter{equation}{0}
%
%
%

Munteanu \cite{Mu} proved a strongly pseudoconvex complex Finsler metric $F$ is a K\"ahler-Finsler metric if and only if the canonical connection $\nabla$ and the Chern-Finsler connection $D$ coincide.
If $F$ is a K\"ahler-Finsler metric, then $R_{\az\bar\bz\mu\bar\nu}=\ooz_{\az\bar\bz;\mu\bar\nu}$. When a strongly pseudoconvex complex Finsler metric $F$ such that $R_{\az\bar\bz\mu\bar\nu}=\ooz_{\az\bar\bz;\mu\bar\nu}$, is it necessarily K\"ahler-Finsler? By virtue of Lemma 3.1, we give a positive answer.

\begin{theorem}
Let $(M,F)$ be a strongly pseudoconvex complex Finsler manifold. Then
\begin{itemize}
\item[{\rm (1)}] $R_{\az\bar\bz\mu\bar\nu}=\ooz_{\az\bar\bz;\mu\bar\nu}$, if and only if
$K_{\nabla}(H)=K_{D}(H)$, if and only if $F$ is a K\"ahler-Finsler metric;
\item[{\rm (2)}] $K_{\nabla}(v)=K_{D}(v)$,
if and only if $F$ is a weakly K\"ahler-Finsler metric.
\end{itemize}
\end{theorem}
\begin{proof}
(1) By the definition of horizontal holomorphic flag curvature,
$$
R_{\az\bar\bz\mu\bar\nu}=\ooz_{\az\bar\bz;\mu\bar\nu}
$$
is equivalent to
$$
K_{\nabla}(H)=K_{D}(H), \q\forall H=H^{\az} \dz_{\az}\in\CH_v.
$$
It follows from \eqref{hsc} in Lemma 3.1 that
$$
\ali
&R_{\az\bar\bz\mu\bar\nu}H^{\az}\bar{H}^{\bz}H^{\mu}\bar{H}^{\nu}\\
=&-\l[\dz_{\bar\nu}\dz_{\mu}(G_{\az\bar\bz})-\dz_{\mu}(G_{\az\bar\bz})G^{\bar\lz\gz}
\dz_{\bar\nu}(G_{\gz\bar\bz})+G_{\az\bar\bz\sz}\dz_{\bar\nu}\l(\varGamma^{\sz}_{;\mu}\r)
\r]H^{\az}\bar{H}^{\bz}H^{\mu}\bar{H}^{\nu}\\
&-\f{1}{2}H^{\az}\bar{H}^{\nu}\big[\dz_{\bar\nu}(G_{\az\bar\lz})-\dz_{\bar\lz}(G_{\az\bar\nu})\big]
G^{\bar\lz\gz}\l[\dz_{\mu}(G_{\gz\bar\bz})-\dz_{\gz}(G_{\mu\bar\bz})\r]\bar{H}^{\bz}H^{\mu},\\
\eali
$$
furthermore
\begin{equation}\label{rc}
\ali
&R_{\az\bar\bz\mu\bar\nu}H^{\az}\bar{H}^{\bz}H^{\mu}\bar{H}^{\nu}
-\ooz_{\az\bar\bz;\mu\bar\nu}H^{\az}\bar{H}^{\bz}H^{\mu}\bar{H}^{\nu}\\
=&-\f{1}{2}H^{\az}\bar{H}^{\nu}\big[\dz_{\bar\nu}(G_{\az\bar\lz})-\dz_{\bar\lz}(G_{\az\bar\nu})\big]G^{\bar\lz\gz}
\l[\dz_{\mu}(G_{\gz\bar\bz})-\dz_{\gz}(G_{\mu\bar\bz})\r]\bar{H}^{\bz}H^{\mu}.\\
\eali
\end{equation}
Now we use the similar argument as in the proof of Main Theorem in \cite{LQ}.
Since $(G^{\bar\lz\gz})$ is positive definite, then
$$
R_{\az\bar\bz\mu\bar\nu}H^{\az}\bar{H}^{\bz}H^{\mu}\bar{H}^{\nu}
=\ooz_{\az\bar\bz;\mu\bar\nu}H^{\az}\bar{H}^{\bz}H^{\mu}\bar{H}^{\nu},\q\forall H=H^{\az} \dz_{\az}\in\CH_v,
$$
if and only if
\begin{equation}
\l[\dz_{\mu}(G_{\gz\bar\bz})-\dz_{\gz}(G_{\mu\bar\bz})\r]\bar{H}^{\bz}H^{\mu}=0, \q\forall H=H^{\az} \dz_{\az}\in\CH_v,
\end{equation}
if and only if
$$
\dz_{\mu}(G_{\gz\bar\bz})=\dz_{\gz}(G_{\mu\bar\bz}), \q\forall 1\leq \mu, \bz,\gz \leq n,
$$
that is $F$ is a K\"ahler-Finsler metric.

(2) By \eqref{rc}, we have
\begin{equation}
\ali
&R_{\az\bar\bz\mu\bar\nu}v^{\az}\bar{v}^{\bz}v^{\mu}\bar{v}^{\nu}
-\ooz_{\az\bar\bz;\mu\bar\nu}v^{\az}\bar{v}^{\bz}v^{\mu}\bar{v}^{\nu}\\
=&-\f{1}{2}\big[\bar{\chi}(G_{\bar\lz})-G_{;\bar\lz}\big]G^{\bar\lz\gz}
\l[\chi(G_{\gz})-G_{;\gz}\r].\\
\eali
\end{equation}
Hence
$$
R_{\az\bar\bz\mu\bar\nu}v^{\az}\bar{v}^{\bz}v^{\mu}\bar{v}^{\nu}
=\ooz_{\az\bar\bz;\mu\bar\nu}v^{\az}\bar{v}^{\bz}v^{\mu}\bar{v}^{\nu},
$$
if and only if $\chi(G_{\gz})-G_{;\gz}=0$. Noting that
$$
G_{;\gz}-\chi(G_{\gz})=G_{\az}\big(\varGamma_{\bz;\gz}^{\az}-\varGamma_{\gz;\bz}^{\az}\big)v^{\bz},
$$
it follows that
$$
R_{\az\bar\bz\mu\bar\nu}v^{\az}\bar{v}^{\bz}v^{\mu}\bar{v}^{\nu}
=\ooz_{\az\bar\bz;\mu\bar\nu}v^{\az}\bar{v}^{\bz}v^{\mu}\bar{v}^{\nu}
$$
if and only if $F$ is a weakly K\"ahler-Finsler metric.
\end{proof}

\begin{rem}
By \eqref{hsc'} and \eqref{rc}, we have $K_{\az\bar\bz\mu\bar\nu}=\ooz_{\az\bar\bz;\mu\bar\nu}$ if and only if
$K_{\nabla^{\mathbbm{c}}}(H)=K_{D}(H)$, if and only if $F$ is a K\"ahler-Finsler metric.
\end{rem}

Next, we explore explicit relations between $\mathbf{Ric}_{\nabla} $ and $\mathbf{Ric}_{D}$ by using the horizontal fundamental form $\oz_{\CH}$ when $M$ is compact. As an added bonus, we obtain a sufficient and necessary condition for a compact strongly pseudoconvex complex Finsler manifold to be balanced. The following lemma follows from straightforward computations.

\begin{lem}\label{le4.2}
Let $(M,F)$ be a compact strongly pseudoconvex complex Finsler manifold. Then
\beeq
\bar{\pa}_{\CH}\bar{\pa}_{\CH}^*\oz_{\CH}
=2\sqrt{-1} \dz_{\bar\nu}\l(S_{\mu}\r)\mathrm{d}z^{\mu} \wedge \mathrm{d}\bar{z}^{\nu}\q and \q
{\pa}_{\CH}{\pa}_{\CH}^*\oz_{\CH}
=2\sqrt{-1} \dz_{\mu}\l(S_{\bar\nu}\r)\mathrm{d}z^{\mu} \wedge \mathrm{d}\bar{z}^{\nu}.
\eneq
\end{lem}

\begin{theorem}\label{th4.3}
Let $(M,F)$ be a compact strongly pseudoconvex complex Finsler manifold. Then
\begin{equation}\label{ric}
\mathbf{Ric}_{D} -\mathbf{Ric}_{\nabla} =\d{1}{2}\l({\pa}_{\CH}{\pa}_{\CH}^*\oz_{\CH}
+\bar{\pa}_{\CH}\bar{\pa}_{\CH}^*\oz_{\CH}\r),
\end{equation}
\begin{equation}\label{sca}
s_{D}-s_{\nabla}=\d{1}{2}\l\la{\pa}_{\CH}{\pa}_{\CH}^*\oz_{\CH}
+\bar{\pa}_{\CH}\bar{\pa}_{\CH}^*\oz_{\CH},\oz_{\CH}\r\ra_{\BP\tilde{M}}.
\end{equation}
Hence, the following three statements are equivalent:
\begin{itemize}
\item[{\rm (a)}] $F$ is a balanced complex Finsler metric;
\item[{\rm (b)}] $\mathbf{Ric}_{D} =\mathbf{Ric}_{\nabla} $;
\item[{\rm (c)}] $\disp\int_{\BP\tilde{M}}s_{D}\mathrm{d}\mu_{\BP\tilde{M}}
=\disp\int_{\BP\tilde{M}}s_{\nabla}\mathrm{d}\mu_{\BP\tilde{M}}$.
\end{itemize}
\end{theorem}
\begin{proof}
By \eqref{hsc}, and a straightforward computation yields
\beeq
\ali
&R _{\mu\bar\nu}\\
=&-\f{1}{2}G^{\bar{\bz}\az}\l[\dz_{\bar\nu}\dz_{\az}(G_{\mu\bar\bz})+\dz_{\mu}\dz_{\bar\bz}(G_{\az\bar\nu})
+\dz_{\bar\nu}\l(\varGamma^{\sz}_{;\mu}\r)G_{\az\bar\bz\sz}
+\dz_{\mu}\l(\varGamma^{\bar\tau}_{;\bar\nu}\r)G_{\az\bar\bz\bar\tau}\r]\\
&+\f{1}{2}G^{\bar{\bz}\az}G^{\bar\lz\gz}\l[\dz_{\mu}(G_{\az\bar\lz})\dz_{\bar\bz}(G_{\gz\bar\nu})
+\dz_{\az}(G_{\mu\bar\lz})\dz_{\bar\nu}(G_{\gz\bar\bz})\r].\\
\eali
\eneq
It is straightforward to get
\beeq\label{e}
\dz_{\mu}\l(S_{\bar\nu}\r)
=\d{1}{2}G^{\bar{\bz}\az}\l[\l(\dz_{\mu}\dz_{\bar\bz}\l(G_{\az\bar{\nu}}\r)
-\dz_{\mu}\dz_{\bar\nu}\l(G_{\az\bar{\bz}}\r)\r)-G^{\bar{\lz}\gz}\dz_{\mu}\l(G_{\az\bar{\lz}}\r)
\l(\dz_{\bar\bz}\l(G_{\gz\bar{\nu}}\r)-\dz_{\bar\nu}\l(G_{\gz\bar{\bz}}\r)\r)\r].
\eneq
It follows from \eqref{e} that
\[
\ali
\dz_{\bar\nu}\l(S_{\mu}\r)
=\d{1}{2}G^{\bar{\bz}\az}\l[\dz_{\bar\nu}\dz_{\az}\l(G_{\mu\bar{\bz}}\r)
-G^{\bar{\lz}\gz}\dz_{\az}\l(G_{\mu\bar{\lz}}\r)\dz_{\bar\nu}\l(G_{\gz\bar{\bz}}\r)\r]
-\d{1}{2}\dz_{\bar\nu}\dz_{\mu}\l(\log\det\l(G_{\az\bar{\bz}}\r)\r),
\eali
\]
\[
\ali
\dz_{\mu}\l(S_{\bar\nu}\r)
=\d{1}{2}G^{\bar{\bz}\az}\l[\dz_{\mu}\dz_{\bar\bz}\l(G_{\az\bar{\nu}}\r)
-G^{\bar{\lz}\gz}\dz_{\mu}\l(G_{\az\bar{\lz}}\r)\dz_{\bar\bz}\l(G_{\gz\bar{\nu}}\r)\r]
-\d{1}{2}\dz_{\mu}\dz_{\bar\nu}\l(\log\det\l(G_{\az\bar{\bz}}\r)\r).
\eali
\]
Hence
\begin{align}
&R _{\mu\bar\nu}+\dz_{\bar\nu}\l(S_{\mu}\r)+\dz_{\mu}\l(S_{\bar\nu}\r)\nonumber\\
=&-\d{1}{2}\dz_{\bar\nu}\dz_{\mu}\l(\log\det\l(G_{\az\bar{\bz}}\r)\r)
-\d{1}{2}\dz_{\mu}\dz_{\bar\nu}\l(\log\det\l(G_{\az\bar{\bz}}\r)\r)\nonumber\\
&-\f{1}{2}G^{\bar{\bz}\az}\l[\dz_{\bar\nu}\l(\varGamma^{\sz}_{;\mu}\r)G_{\az\bar\bz\sz}
+\dz_{\mu}\l(\varGamma^{\bar\tau}_{;\bar\nu}\r)G_{\az\bar\bz\bar\tau}\r]\nonumber\\
=&-\dz_{\bar\nu}\dz_{\mu}\l(\log\det\l(G_{\az\bar{\bz}}\r)\r)
-G^{\bar{\bz}\az}G_{\az\bar\bz\sz}\dz_{\bar\nu}\l(\varGamma^{\sz}_{;\mu}\r)\nonumber\\
=&G^{\bar{\bz}\az}\ooz_{\az\bar{\bz};\mu\bar{\nu}}\nonumber\\
=&\ooz _{\mu\bar\nu},\label{4}
\end{align}
It follows from Lemma \ref{le4.2} and \eqref{4} that equality \eqref{ric} holds.
Furthermore
\[
G^{\bar{\nu}\mu}\l(R _{\mu\bar\nu}+\dz_{\bar\nu}\l(S_{\mu}\r)+\dz_{\mu}\l(S_{\bar\nu}\r)\r)
=G^{\bar{\nu}\mu}\ooz _{\mu\bar\nu}
\]
and
\[
\d{1}{2}\l\la{\pa}_{\CH}{\pa}_{\CH}^*\oz_{\CH}
+\bar{\pa}_{\CH}\bar{\pa}_{\CH}^*\oz_{\CH},\oz_{\CH}\r\ra_{\BP\tilde{M}}
=G^{\bar{\nu}\mu}\l(\dz_{\bar\nu}\l(S_{\mu}\r)+\dz_{\mu}\l(S_{\bar\nu}\r)\r)
\]
yields equality \eqref{sca}.

Clearly, (a) implies (b), and (b) implies (c). It follows from \eqref{sca} that
\[
\ali
&\l({\pa}_{\CH}^*\oz_{\CH},{\pa}_{\CH}^*\oz_{\CH}\r)_{\BP\tilde{M}}
+\l(\bar{\pa}_{\CH}^*\oz_{\CH},\bar{\pa}_{\CH}^*\oz_{\CH}\r)_{\BP\tilde{M}}\\
=&\l({\pa}_{\CH}{\pa}_{\CH}^*\oz_{\CH}
+\bar{\pa}_{\CH}\bar{\pa}_{\CH}^*\oz_{\CH},\oz_{\CH}\r)_{\BP\tilde{M}}\\
=&\int_{\BP\tilde{M}}\l\la{\pa}_{\CH}{\pa}_{\CH}^*\oz_{\CH}
+\bar{\pa}_{\CH}\bar{\pa}_{\CH}^*\oz_{\CH},\oz_{\CH}\r\ra_{\BP\tilde{M}}\mathrm{d}\mu_{\BP\tilde{M}}\\
=&2\int_{\BP\tilde{M}}\l(s_{D}-s_{\nabla}\r)\mathrm{d}\mu_{\BP\tilde{M}}.
\eali
\]
We now assume that (c) is true, thus ${\pa}_{\CH}^*\oz_{\CH}=\bar{\pa}_{\CH}^*\oz_{\CH}=0$, i.e.,
$F$ is balanced.
\end{proof}

For simplicity, we set
\beeq
S_{\az\gz\bar{\lz}}=\d{1}{2}\l(\dz_{\gz}\l(G_{\az\bar{\lz}}\r)-\dz_{\az}\l(G_{\gz\bar{\lz}}\r)\r),
\eneq
\beeq
\mathfrak{S}=S_{\az\gz\bar{\lz}}\mathrm{d}z^\az\wedge \mathrm{d}z^\gz\wedge \mathrm{d}\bar{z}^{\nu}.\label{s}
\eneq
Now we introduce a notation
\beeq
\mathfrak{S}\circ \ov{\mathfrak{S}}:
=G^{\bar{\bz}\az}G^{\bar\lz\gz}S_{\az\gz\bar{\nu}}\ov{S_{\bz\lz\bar{\mu}}}\mathrm{d}z^\mu\wedge \mathrm{d}\bar{z}^\nu.
\eneq
Then
\beeq
G^{\bar{\nu}\mu}\l(\mathfrak{S}\circ \ov{\mathfrak{S}}\r)_{\mu\bar{\nu}}
=\l\la \sqrt{-1}\mathfrak{S}\circ \ov{\mathfrak{S}},\oz_{\CH}\r\ra_{\BP\tilde{M}}
=\l\la \mathfrak{S},\mathfrak{S}\r\ra_{\BP\tilde{M}}.
\eneq
By \eqref{hsc'}, it is easy to check that
\beeq\label{5}
\ali
K _{\mu\bar\nu}
={R} _{\mu\bar\nu}-\l(\mathfrak{S}\circ \ov{\mathfrak{S}}\r)_{\mu\bar{\nu}},
\eali
\eneq
\beeq\label{6}
s_{\nabla^{\mathbbm{c}}}
=s_{\nabla}-\l\la \mathfrak{S},\mathfrak{S}\r\ra_{\BP\tilde{M}}.
\eneq

By Theorem \ref{th4.3} and equations \eqref{5}, \eqref{6}, we obtain the following theorem immediately.

\begin{theorem}
Let $(M,F)$ be a compact strongly pseudoconvex complex Finsler manifold. Then
\begin{equation}\label{ric2}
\mathbf{Ric}_{D} -\mathbf{Ric}_{\nabla^{\mathbbm{c}}} =\d{1}{2}\l({\pa}_{\CH}{\pa}_{\CH}^*\oz_{\CH}
+\bar{\pa}_{\CH}\bar{\pa}_{\CH}^*\oz_{\CH}\r)+\sqrt{-1}\mathfrak{S}\circ \ov{\mathfrak{S}},
\end{equation}
\begin{equation}\label{sca2}
s_{D}-s_{\nabla^{\mathbbm{c}}}=\d{1}{2}\l\la{\pa}_{\CH}{\pa}_{\CH}^*\oz_{\CH}
+\bar{\pa}_{\CH}\bar{\pa}_{\CH}^*\oz_{\CH},\oz_{\CH}\r\ra_{\BP\tilde{M}}
+\la \mathfrak{S},\mathfrak{S}\ra_{\BP\tilde{M}}.
\end{equation}
Hence, the following four statements are equivalent:
\begin{itemize}
\item[{\rm (a)}] $F$ is a K\"ahler-Finsler metric;
\item[{\rm (b)}] $\mathbf{Ric}_{D} =\mathbf{Ric}_{\nabla^{\mathbbm{c}} } $;
\item[{\rm (c)}] $\disp\int_{\BP\tilde{M}}s_{\nabla}\mathrm{d}\mu_{\BP\tilde{M}}
=\disp\int_{\BP\tilde{M}}s_{\nabla^{\mathbbm{c}}}\mathrm{d}\mu_{\BP\tilde{M}}$;
\item[{\rm (d)}] $\disp\int_{\BP\tilde{M}}s_{D}\mathrm{d}\mu_{\BP\tilde{M}}
=\disp\int_{\BP\tilde{M}}s_{\nabla^{\mathbbm{c}}}\mathrm{d}\mu_{\BP\tilde{M}}$.
\end{itemize}
\end{theorem}

\section{A sufficient condition for balanced complex Finsler manifolds to be K\"ahler-Finsler}
\setcounter{equation}{0}

First, we response the question raised in subsection 2.3 when the complex manifold is compact.
Now we give a sufficient condition for a compact Rund K\"ahler-Finsler-like manifold to be K\"ahler-Finsler.

\begin{theorem}
Let $(M,F)$ be a compact Rund K\"ahler-Finsler-like manifold. If there is a positive horizontal $(n-2,n-2)$ form $\varphi$ on $\BP\tilde{M}$ such that $\pa_{\CH}\bar{\pa}_{\CH}\varphi=0$,
then $F$ is a K\"ahler-Finsler metric. In particular, if $\pa_{\CH}\bar{\pa}_{\CH}\oz_{\CH}^{n-2}=0$, then
$F$ is a K\"ahler-Finsler metric.
\end{theorem}
\begin{proof}
A direct computation shows
\beeq\label{ddbar}
\ali
\pa_{\CH}\bar{\pa}_{\CH}\oz_{\CH}=&\frac{\sqrt{-1}}{4}\l[\dz_{\mu}\dz_{\bar\bz}\l(G_{\az\bar{\nu}}\r)
+\dz_{\az}\dz_{\bar\nu}\l(G_{\mu\bar{\bz}}\r)-\dz_{\az}\dz_{\bar\bz}\l(G_{\mu\bar{\nu}}\r)
-\dz_{\mu}\dz_{\bar\nu}\l(G_{\az\bar{\bz}}\r)\r]\\
& \mathrm{d}z^\az\wedge \mathrm{d}z^\mu \wedge \mathrm{d}\bar{z}^\bz \wedge \mathrm{d}\bar{z}^\nu.
\eali
\eneq
When $F$ is Rund K\"ahler-Finsler-like, a routine calculation implies
\[
\ali
&\l[\dz_{\mu}\dz_{\bar\bz}\l(G_{\az\bar{\nu}}\r)-\dz_{\mu}\dz_{\bar\nu}\l(G_{\az\bar{\bz}}\r)\r]
+\l[\dz_{\az}\dz_{\bar\nu}\l(G_{\mu\bar{\bz}}\r)-\dz_{\az}\dz_{\bar\bz}\l(G_{\mu\bar{\nu}}\r)\r]\\
=&\dz_{\mu}\l[\l(\varGamma^{\bar{\lz}}_{\bar{\nu};\bar{\bz}}
-\varGamma^{\bar{\lz}}_{\bar{\bz};\bar{\nu}}\r)G_{\az\bar{\lz}}\r]
+\dz_{\az}\l[\l(\varGamma^{\bar{\lz}}_{\bar{\bz};\bar{\nu}}
-\varGamma^{\bar{\lz}}_{\bar{\nu};\bar{\bz}}\r)G_{\mu\bar{\lz}}\r]\\
=&2\l[\dz_{\az}\l(G_{\mu\bar{\lz}}\r)-\dz_{\mu}\l(G_{\az\bar{\lz}}\r)\r]
\ov{S^{\lz}_{\bz\nu}}\\
=&-4G_{\gz\bar{\lz}}S^{\gz}_{\az\mu}\ov{S^{\lz}_{\bz\nu}}.\\
\eali
\]
It follows from \eqref{ddbar} that
\beeq\label{ddbar2}
\sqrt{-1}\pa_{\CH}\bar{\pa}_{\CH}\oz_{\CH}= G_{\gz\bar{\lz}}S^{\gz}_{\az\mu}\ov{S^{\lz}_{\bz\nu}}
 \mathrm{d}z^\az\wedge \mathrm{d}z^\mu \wedge \mathrm{d}\bar{z}^\bz \wedge \mathrm{d}\bar{z}^\nu.
\eneq
Notice that $G_{\gz\bar{\lz}}S^{\gz}_{\az\mu}\ov{S^{\lz}_{\bz\nu}}
 \mathrm{d}z^\az\wedge \mathrm{d}z^\mu \wedge \mathrm{d}\bar{z}^\bz \wedge \mathrm{d}\bar{z}^\nu$ is a global nonnegative horizontal $(2,2)$-form defined on $\BP\tilde{M}$
 and vanishes if and only if $F$ is horizonal torsion-free.
Suppose $M$ is compact and there is a positive horizontal form $\varphi\in \CA^{n-2,n-2}$ such that $\pa_{\CH}\bar{\pa}_{\CH}\varphi=0$.
Then
\[
\ali
&(\sqrt{-1}\oz_{\CH},\bar{\pa}_{\CH}^{*}\pa_{\CH}^{*}*\varphi)_{\BP\tilde{M}}\\
=&(\sqrt{-1}\pa_{\CH}\bar{\pa}_{\CH}\oz_{\CH},*\varphi)_{\BP\tilde{M}}\\
=&\int_{\BP\tilde{M}}\sqrt{-1}\pa_{\CH}\bar{\pa}_{\CH}\oz_{\CH}\wedge \varphi\wedge \d{\oz_{\CV}^{n-1}}{(n-1)!} .
\eali
\]
On the other hand,
\[
\bar{\pa}_{\CH}^{*}\pa_{\CH}^{*}*\varphi=*\pa_{\CH}**\bar{\pa}_{\CH}**\varphi
=-*\pa_{\CH}\bar{\pa}_{\CH}\varphi=0.
\]
Hence, $\sqrt{-1}\pa_{\CH}\bar{\pa}_{\CH}\oz_{\CH}=0$. It follows from \eqref{ddbar2}
that $S^{\gz}_{\az\mu}=0$, or equivalently, $F$ is a K\"ahler-Finsler metric.
\end{proof}

From the above theorem, we can see every compact Rund K\"ahler-Finsler-like surface must be K\"ahler-Finsler. Now we explore the relation between Rund K\"ahler-Finsler-like metrics
and balanced complex Finsler metrics on a compact complex manifold.

\begin{theorem}
Let $(M,F)$ be a Rund K\"ahler-Finsler-like manifold. If $M$ is compact, then $F$ is a balanced
complex Finsler metric.
\end{theorem}
\begin{proof}
Suppose $M$ is compact. Let $X=X^\az\dz_\az$ be a horizontal vector field on $\BP\tilde{M}$.
We define the divergence of $X $ with respect to $\mathrm{d}\mu_{\BP\tilde{M}}$ by
\begin{equation}
({\rm div} X)\mathrm{d}\mu_{\BP\tilde{M}}=\CL_X \mathrm{d}\mu_{\BP\tilde{M}},
\end{equation}
where $\CL_X$ is the Lie derivative along $X$ with respect to the volume form $\mathrm{d}\mu_{\BP\tilde{M}}$.
We denote by $\mathfrak{i}\l(X\r)$ the inter product operator.
By \cite{ZhC}, we have
\begin{equation}\label{stokes}
({\rm div} X)\mathrm{d}\mu_{\BP\tilde{M}}=\mathrm{d}\l(\mathfrak{i}\l(X\r) \mathrm{d}\mu_{\BP\tilde{M}}\r),
\end{equation}
\begin{equation}\label{div}
{\rm div} X=\dz_{\az}\l(X^\az\r)+X^\az\varGamma^{o}_{o;\az}.
\end{equation}
We define a horizontal (1,0) vector field
\beeq
\uparrow \mathcal{S}=G^{\bar{\nu}\az}S_{\bar\nu}\dz_{\az}.
\eneq
By \eqref{div}, we have
\beeq\label{7}
{\rm div}\l(\uparrow \mathcal{S}\r)=G^{\bar{\nu}\az}\dz_{\az}\l(S_{\bar{\nu}}\r)
-2G^{\bar{\nu}\az}S_{\az} S_{\bar\nu}.
\eneq
Integrating on both sides of  equality \eqref{7} on $\BP\tilde{M}$ together with \eqref{stokes} yields
\[
\int_{\BP\tilde{M}}G^{\bar{\nu}\az}\l[\dz_{\az}\l(S_{\bar{\nu}}\r)
-2S_{\az} S_{\bar\nu}\r]\mathrm{d}\mu_{\BP\tilde{M}}=0.
\]
When $F$ is Rund K\"ahler-Finsler-like, we have $\dz_{\az}\big(S_{\bar{\nu}}\big)=0$ and
$(\mathcal{S},\mathcal{S})_{\BP\tilde{M}}=0$. Thus every $S_{\az}=0$, that is $F$ is balanced.
\end{proof}

\begin{rem}
From the above proof, we can see a compact strongly pseudoconvex complex Finsler manifold is balanced if and only if $\bar{\pa}_{\CH}\CS=0$.
\end{rem}

Next, we response the question raised in subsection 2.2 when the complex manifold is compact. We obtain the following result:

\begin{theorem}
Let $(M,F)$ be a balanced complex Finsler manifold. Then $F$ is a K\"ahler-Finsler metric, if and only if
$$
\l\la\pa_{\CH}\bar{\pa}_{\CH}\oz_{\CH}, \oz_{\CH}^2\r\ra_{\BP\tilde{M}}=0.
$$
\end{theorem}

\begin{proof}
We need only to prove the sufficiency.
By \eqref{ddbar}, we have
\[
\l\la\pa_{\CH}\bar{\pa}_{\CH}\oz_{\CH}, \oz_{\CH}^2\r\ra_{\BP\tilde{M}}
=\sqrt{-1}G^{\bar{\bz}\az}G^{\bar{\nu}\mu}\l[\dz_{\mu}\dz_{\bar\bz}\l(G_{\az\bar{\nu}}\r)
+\dz_{\az}\dz_{\bar\nu}\l(G_{\mu\bar{\bz}}\r)-\dz_{\az}\dz_{\bar\bz}\l(G_{\mu\bar{\nu}}\r)
-\dz_{\mu}\dz_{\bar\nu}\l(G_{\az\bar{\bz}}\r)\r].
\]
Then the condition $\l\la\pa_{\CH}\bar{\pa}_{\CH}\oz_{\CH}, \oz_{\CH}^2\r\ra_{\BP\tilde{M}}=0$ is equivalent to
\begin{equation}\label{d}
G^{\bar{\bz}\az}G^{\bar{\nu}\mu}\l[\dz_{\mu}\dz_{\bar\bz}\l(G_{\az\bar{\nu}}\r)
+\dz_{\az}\dz_{\bar\nu}\l(G_{\mu\bar{\bz}}\r)-\dz_{\az}\dz_{\bar\bz}\l(G_{\mu\bar{\nu}}\r)
-\dz_{\mu}\dz_{\bar\nu}\l(G_{\az\bar{\bz}}\r)\r]=0.
\end{equation}
By the balanced condition, we have
\beeq
G^{\bar{\bz}\az}\l[\dz_{\mu}\dz_{\bar\bz}\l(G_{\az\bar{\nu}}\r)
-\dz_{\mu}\dz_{\bar\nu}\l(G_{\az\bar{\bz}}\r)\r]
=2G^{\bar{\bz}\az}G^{\bar{\lz}\gz}\dz_{\mu}\l(G_{\az\bar{\lz}}\r)\ov{S_{\nu\bz\bar{\gz}}}.
\eneq
Exchanging indices $\az,\,\bar{\bz}$ with $\mu,\,\bar{\nu}$, respectively, implies
\beeq
G^{\bar{\nu}\mu}\l[\dz_{\az}\dz_{\bar\nu}\l(G_{\mu\bar{\bz}}\r)
-\dz_{\az}\dz_{\bar\bz}\l(G_{\mu\bar{\nu}}\r)
\r]=2G^{\bar{\nu}\mu}\dz_{\az}\l(G_{\mu\bar{\lz}}\r)G^{\bar{\lz}\gz}\ov{S_{\bz\nu\bar{\gz}}}.
\eneq
Therefore
$$
\ali
&G^{\bar{\bz}\az}G^{\bar{\nu}\mu}\l[\dz_{\mu}\dz_{\bar\bz}\l(G_{\az\bar{\nu}}\r)
+\dz_{\az}\dz_{\bar\nu}\l(G_{\mu\bar{\bz}}\r)-\dz_{\az}\dz_{\bar\bz}\l(G_{\mu\bar{\nu}}\r)
-\dz_{\mu}\dz_{\bar\nu}\l(G_{\az\bar{\bz}}\r)\r]\\
=&-4G^{\bar{\bz}\az}G^{\bar{\nu}\mu}G^{\bar{\lz}\gz}S_{\az\mu\bar{\lz}}\ov{S_{\bz\nu\bar{\gz}}}
=-4\la \mathfrak{S},\mathfrak{S}\ra_{\BP\tilde{M}}.
\eali
$$
By \eqref{d}, we have
$$
\l(\mathfrak{S},\mathfrak{S}\r)_{\BP\tilde{M}}
=\disp\int_{\BP\tilde{M}}\la \mathfrak{S},\mathfrak{S}\ra_{\BP\tilde{M}}\mathrm{d}\mu_{\BP\tilde{M}}=0
$$
Hence, $S_{\az\mu\bar{\lz}}=0$ and $F$ is a K\"ahler-Finsler metric.
\end{proof}

Now we give two examples of balanced complex Finsler metrics (resp. Rund K\"ahler-Finsler-like metrics).

\begin{ex} {\rm(Szab$\acute{o}$ metric \cite{CY, XZ4, XZ5})} \quad
Let $M_1,~M_2$ be two complex manifolds with ${\rm dim}_{\BC}M_1=n$ and ${\rm dim}_{\BC}M_2=m$, and $z=(z_1, z_2)\in M_1\times M_2$
and $v=(v_1,v_2)\in T^{1,0}_{z_1}M_1\oplus T^{1,0}_{z_2}M_2$ be the local complex coordinates on $M_1\times M_2$ and the fiber coordinates on
$T^{1,0}_{(z_1, z_2)}(M_1\times M_2)$, respectively. One can define the Szab$\acute{o}$ metric on $M_1\times M_2$ by
\beeq
F_{\varepsilon}=\sqrt{h_1^2(z_1, {v}_1)+h_2^2(z_2, {v}_2)
+\varepsilon\l(\l(h_1(z_1, {v}_1)\r)^{2k}+\l(h_2(z_2, {v}_2)\r)^{2k}\r)^{\frac{1}{k}}},
\eneq
where $\varepsilon>0$, $k>0$, are constants,
and $h_1$ and $h_2$ are Hermitian metrics on $M_1$ and $M_2$, respectively.
\end{ex}

It was proved that $F_{\varepsilon}$ is a strongly pseudoconvex metric on $M_1\times M_2$, and that $F_{\varepsilon}$ is a K\"ahler-Finsler metric if and only if $h_1$ and $h_2$ are both K\"ahler metrics on $M_1$ and $M_2$, respectively. We denote by ${^{M_1}\varGamma}^{\az}_{\bz;\mu} $ and ${^{M_2}\varGamma}^{i}_{j;k}$
the Chern connection coefficients of $h_1$ and $h_2$, respectively, where lowercase Greek indices run from 1 to $n$ and lowercase Latin indices run from 1 to $m$. The horizontal connection coefficients of the Chern-Finsler connection associated to $F_\varepsilon$ are \cite{CY}
\beeq
\ali
&\q\varGamma^{\az}_{\bz;\mu}={^{M_1}\varGamma}^{\az}_{\bz;\mu},\q
\varGamma^{i+n}_{j+n;k+n}={^{M_2}\varGamma}^{i}_{j;k},\\
\varGamma^{\az}_{j+n;\mu}=&\varGamma^{\az}_{\bz;k+n}=\varGamma^{\az}_{j+n;k+n}
=\varGamma^{i+n}_{\bz;\mu}=\varGamma^{i+n}_{j+n;\mu}=\varGamma^{i+n}_{\bz;k+n}=0.
\eali
\eneq
Hence, $S_\az={^{M_1}S}_{\az}$, $S_{i+n}={^{M_2}S}_{i}$.
Notice that the horizontal connection coefficients of the Chern-Finsler connection associated to $F_\varepsilon$ depend only on the base manifold coordinates, we have
\[
\hat{\ooz}^{\az}_{\bz;\mu\bar{\nu}}=-\d{\pa}{\pa \bar{z}^{\nu}}\l({^{M_1}\Gamma}^{\az}_{\bz;\mu}\r),\q
\hat{\ooz}^{i+n}_{j+n;k+n\, \ov{l+n} }=-\d{\pa}{\pa \bar{z}^{l+n}}\l({^{M_2}\Gamma}^{i}_{j;k}\r),
\]
and the other horizontal curvature terms of the Rund connection are zero. Hence
\begin{prop}
The symbols put as above. Suppose that $M_1$ and $M_2$ are both compact complex manifold. Then

{\rm(i)} $F_{\varepsilon}$ is a balanced
complex Finsler metric if and only if $h_1$ and $h_2$ are balanced metrics on $M_1$ and $M_2$ , respectively.

{\rm(ii)} $F_{\varepsilon}$ is a Rund K\"ahler-Finsler-like metric if and only if $h_1$ and $h_2$ are K\"ahler-like metrics on $M_1$ and $M_2$, respectively.
\end{prop}

By analogy with the Randers metrics in real Finsler geometry,
Aldea and Munteanu introduced complex Randers metrics.

\begin{ex} {\rm(complex Randers metric \cite{AM, CS2, XZ6})} \quad
Let $\az(z,v)=\sqrt{h_{\az\bar \bz}(z)v^\az \bar{v}^{\bz}}$
be a Hermitian metric on a complex manifold $M$ and $\bz(z,v)=b_\az(z)v^\az$
be a $(1,0)$-form. The complex Randers metric on $M$ are defined by
 $$
 F(z,v)=\az+|\bz|.
 $$
\end{ex}

It was proved that $F(z,v)=\az+|\bz|$ is a strongly pseudoconvex (even strongly convex) complex Finsler metric.
Moreover, if $\bz\otimes \bar{\bz}$ is parallel with respect to $\az$, then
$\varGamma^{\az}_{\bz;\mu}=\gz^{\az}_{\bz;\mu}$, where $\gz^{\az}_{\bz;\mu}$ are the Chern connection coefficients of $\az$.
Hence
\begin{prop}
The symbols put as above. Suppose that  $\bz\otimes \bar{\bz}$ is parallel with respect to $\az$. Then

{\rm(i)}  $F=\az+|\bz|$ is a balanced complex Finsler metric if and only if $\az$ is a balanced metric.

{\rm(ii)} $F=\az+|\bz|$ is a Rund K\"ahler-Finsler-like metric if and only if $\az$ is a K\"ahler-like metric.
\end{prop}

\section{Conformal transformations of a balanced complex Finsler manifold}
\setcounter{equation}{0}

Suppose that $\tilde {F}(z,v)={\rm e}^{\frac{1}{2}\rz(z)}F(z,v)$ is a conformal metric of a strongly pseudoconvex complex Finsler metric $F$, where $\rz$ is a smooth real function on $M$.
Set $\tilde {G}=\tilde {F}^2$ and $\tilde {G}_{\az\bar{\bz}}
=\dot\pa_\az\dot\pa_{\bar \bz}(\tilde {G})$.
From now on, we denote by an extra symbol `` $\sim$ " the corresponding quantities with respect to the conformal metric $\tilde {F}(z,v)$.

\begin{lem}{\rm\cite{Al1}}\label{lem6.1}
Suppose $\tilde {F}(z,v)={\rm e}^{\frac{1}{2}\rz(z)}F(z,v)$ is a conformal metric of
a strongly pseudoconvex complex Finsler metric $F$. Then
\begin{align*}
 \tilde{\varGamma}_{;\bz}^{\az}&=\varGamma_{;\bz}^{\az}+\rz_{;\bz}v^\az,
  \q~ \tilde{\dz}_\bz=\dz_\bz-\rz_{;\bz}\iota,\\
\tilde \varGamma_{\bz;\gz}^{\az}&=\varGamma_{\bz;\gz}^{\az}+\dz^\az_\bz\rho_{;\gz},
\q \tilde \varGamma_{\bz\gz}^{\az}=\varGamma_{\bz\gz}^{\az},
\end{align*}
where $\rho_{;\bz}=\frac{\pa \rz}{\pa z^\bz}$, and $\iota=v^\az\dot\pa_\az$ is the radial vertical vector field associated to $F$.
\end{lem}

\begin{theorem}
On a compact complex manifold $M$, there exists at most one balanced complex Finsler metric (up to constant multiples) in each conformal class of complex Finsler metrics. If  $(M,F)$ is a non-compact Rund K\"ahler-Finsler-like manifold, then the conformal 
metric $\tilde {F}(z,v)={\rm e}^{\frac{1}{2}\rz(z)}F(z,v)$ is Rund K\"ahler-Finsler-like if and only if $\pa\bar{\pa}\rho=0$.
\end{theorem}

\begin{proof}
Suppose that $F$ is a balanced complex Finsler metric on a compact complex manifold, and
$\tilde {F}(z,v)={\rm e}^{\frac{1}{2}\rz(z)}F(z,v)$ is a conformal transformation of $F$. By equality (3.4) in \cite{LQX}, we have
\beeq\label{6.1}
\tilde{S}_{\az}=S_{\az}+\d{1}{2}(1-n)\rho_{;\az}.
\eneq
If $\tilde{F}$ is also balanced, then $\tilde{S}_{\az}=S_{\az}=0$ and $\rho_{;\az}=0$,
i.e., $\rho$ is a constant.

Now assume that $(M, F)$ is a non-compact strongly pseudoconvex complex Finsler manifold.
Noting that ${S}^{\az}_{\bz\gz}$ is $(0, 0)$ homogeneous with respect to the fiber coordinate $v$, we have $\iota({S}^{\az}_{\bz\gz})=0$.
By Lemma \ref{lem6.1}, we have
\beeq
\tilde{\dz}_{\bar{\nu}}\l(\tilde{S}^{\az}_{\bz\gz}\r)
=\dz_{\bar\nu}\l(S_{\bz\gz}^{\az}\r)
+\frac{1}{2}(\dz^\az_\bz\rho_{;\gz\bar{\nu}}-\dz^\az_\gz\rho_{;\bz\bar{\nu}}).
\eneq
If $F$ is Rund K\"ahler-Finsler-like, then the conformal metric  $\tilde{F}$ is Rund K\"ahler-Finsler-like if and only if 
$\dz^\az_\bz\rho_{;\gz\bar{\nu}}-\dz^\az_\gz\rho_{;\bz\bar{\nu}}=0$.
Contracting on indices $\az$ and $\bz$, it yields $(n-1)\rho_{;\gz\bar{\nu}} = 0$, which is equivalent to
$\pa\bar{\pa}\rho=0$. This completes the proof.
\end{proof}

\begin{rem}
Obviously, the equality $\pa\bar{\pa}\rho=0$ implies that $\rho$ is a harmonic function, however, the converse is not ture. Especially for ${\rm dim}_{\BC}M=2$, it is easy to check that $\rho_1(z_1, z_2)=x_1x_2-y_1y_2$ and
 $\rho_2(z_1, z_2)=x_1y_2+y_1x_2$ both satisfy $\pa\bar{\pa}\rho=0$.
\end{rem}

\begin{theorem}
On a compact strongly pseudoconvex complex Finsler manifold $(M,F)$, the conformal metric $\tilde {F}(z,v)={\rm e}^{\frac{1}{2}\rz(z)}F(z,v)$ is balanced, if and only if
\beeq\label{6.3}
\mathbf{Ric}_{D} -\mathbf{Ric}_{\nabla} =(n-1)\sqrt{-1}\pa\bar{\pa}\rho,
\eneq
if and only if
\beeq\label{6.4}
\int_{\BP\tilde{M}}\l(s_{D} -s_{\nabla} \r)\mathrm{d}\mu_{\BP\tilde{M}}
=(n-1)\l(\sqrt{-1}\pa\bar{\pa}\rho,\oz_{\CH}\r)_{\BP\tilde{M}}.
\eneq
\end{theorem}

\begin{proof}
By Lemma \ref{lem6.1} and equalitis \eqref{24}, \eqref{6.1}, we obtain
\beeq
\bar{\pa}_{\widetilde{\CH}}^*\oz_{\widetilde{\CH}}
=\bar{\pa}_{\CH}^*\oz_{\CH}+(n-1)\sqrt{-1}\pa\rho.
\eneq
Therefore
\beeq\label{6.6}
\bar{\pa}_{\widetilde{\CH}}\bar{\pa}_{\widetilde{\CH}}^*\oz_{\widetilde{\CH}}
=\bar{\pa}_{\CH}\bar{\pa}_{\CH}^*\oz_{\CH}-(n-1)\sqrt{-1}\pa\bar{\pa}\rho
\eneq
and
\beeq
{\pa}_{\widetilde{\CH}}\bar{\pa}_{\widetilde{\CH}}^*\oz_{\widetilde{\CH}}
={\pa}_{\CH}\bar{\pa}_{\CH}^*\oz_{\CH}.
\eneq
If $\tilde{F}$ is balanced, i.e., $\bar{\pa}_{\widetilde{\CH}}^*\oz_{\widetilde{\CH}}=0$,
then
\beeq\label{25}
{\pa}_{\CH}{\pa}_{\CH}^*\oz_{\CH}+\bar{\pa}_{\CH}\bar{\pa}_{\CH}^*\oz_{\CH}=2(n-1)\sqrt{-1}\pa\bar{\pa}\rho.
\eneq
It follows from \eqref{ric} and \eqref{25} that \eqref{6.3} holds. On the other hand, suppose that \eqref{6.3} holds.
By \eqref{ric} and\eqref{6.6}, we obtain ${\pa}_{\widetilde{\CH}}{\pa}_{\widetilde{\CH}}^*\oz_{\widetilde{\CH}}
+\bar{\pa}_{\widetilde{\CH}}\bar{\pa}_{\widetilde{\CH}}^*\oz_{\widetilde{\CH}}=0$.
Compactness can reduce ${\pa}_{\widetilde{\CH}}^*\oz_{\widetilde{\CH}}
=\bar{\pa}_{\widetilde{\CH}}^*\oz_{\widetilde{\CH}}=0$.
This completes the proof.
\end{proof}
\begin{theorem}
Suppose that $(M,F)$ is a balanced complex Finsler manifold with positive Chern-Finsler scalar curvature. Then the conformal metric $\tilde {F}(z,v)={\rm e}^{\frac{1}{2}\rz(z)}F(z,v)$ owns positive Chern-Finsler total scalar curvature, that is
\beeq\label{ }
\int_{\BP\tilde{M}}\tilde{s}_{\tilde{D}}  \mathrm{d}\tilde{\mu}_{\BP\tilde{M}}>0.
\eneq
\end{theorem}
\begin{proof}
By Lemma \ref{lem6.1}, a direct computation shows
\beeq
\tilde{\ooz}_{\az\bar{\bz}}={\ooz}_{\az\bar{\bz}}-n\rho_{;\az\bar{\bz}},
\eneq
and then
\beeq
\tilde{s}_{\tilde{D}}={\rm e}^{-\rho}\l(s_D-nG^{\bar{\bz}\az}\rho_{;\az\bar{\bz}}\r).
\eneq
Note that \cite{CSZ}
\[
\mathrm{d}\tilde{\mu}_{\BP\tilde{M}}={\rm e}^{n\rho}\mathrm{d}{\mu}_{\BP\tilde{M}},
\]
thus
\beeq
\int_{\BP\tilde{M}}\tilde{s}_{\tilde{D}}  \mathrm{d}\tilde{\mu}_{\BP\tilde{M}}
=\int_{\BP\tilde{M}}{\rm e}^{(n-1)\rho}\l(s_D-nG^{\bar{\bz}\az}\rho_{;\az\bar{\bz}}\r) \mathrm{d}{\mu}_{\BP\tilde{M}}.
\eneq
By the balanced condition, we have
\beeq
{\rm div}\l(\uparrow \pa{\rm e}^{(n-1)\rho}\r)=G^{\bar{\bz}\az}({\rm e}^{(n-1)\rho})_{;\az\bar{\bz}}.
\eneq
Hence
\[
\ali
&n\int_{\BP\tilde{M}}{\rm e}^{(n-1)\rho}G^{\bar{\bz}\az}\rho_{;\az\bar{\bz}} \mathrm{d}{\mu}_{\BP\tilde{M}}\\
=&\d{n}{n-1}\int_{\BP\tilde{M}}G^{\bar{\bz}\az}\l[\l({\rm e}^{(n-1)\rho}\r)_{;\az\bar{\bz}}
-(n-1)^2{\rm e}^{(n-1)\rho}\rho_{;\az}\rho_{;\bar{\bz}}\r]\mathrm{d}{\mu}_{\BP\tilde{M}}\\
=&\d{n}{n-1}\int_{\BP\tilde{M}}\l[{\rm div}\l(\uparrow \pa{\rm e}^{(n-1)\rho}\r)
-4\l\la\pa{\rm e}^{\f{1}{2}(n-1)\rho},\pa{\rm e}^{\f{1}{2}(n-1)\rho}\r\ra_{\BP\tilde{M}}\r]
\mathrm{d}{\mu}_{\BP\tilde{M}}\\
=&-\d{4n}{n-1}\l(\pa{\rm e}^{\f{1}{2}(n-1)\rho}, \pa{\rm e}^{\f{1}{2}(n-1)\rho}\r)_{\BP\tilde{M}}.
\eali
\]
Therefore, we obtain
\beeq
\int_{\BP\tilde{M}}\tilde{s}_{\tilde{D}}  \mathrm{d}\tilde{\mu}_{\BP\tilde{M}}
=\int_{\BP\tilde{M}}{\rm e}^{(n-1)\rho}s_D \mathrm{d}{\mu}_{\BP\tilde{M}}+\d{4n}{n-1}\l(\pa{\rm e}^{\f{1}{2}(n-1)\rho},\pa{\rm e}^{\f{1}{2}(n-1)\rho}\r)_{\BP\tilde{M}}.
\eneq
If $s_D>0$, then $\disp\int_{\BP\tilde{M}}\tilde{s}_{\tilde{D}}  \mathrm{d}\tilde{\mu}_{\BP\tilde{M}}
\geq\int_{\BP\tilde{M}}{\rm e}^{(n-1)\rho}s_D \mathrm{d}{\mu}_{\BP\tilde{M}}>0$.
\end{proof}

\bigskip

\noindent{\small{\bf Acknowledgements:}\
This research is supported by the National Natural Science Foundation of China (Grant Nos. 12001165, 12071386, 11701494),  the Nanhu Scholars Program for Young Scholars of Xinyang Normal University and the Key Research Project of Henan Higher Education Institutions(China) (No. 22A110021).}

\bigskip

\end{document}